\documentclass[a4]{article}

\usepackage{amsmath,amsfonts,amsthm,amssymb,amscd,graphicx}
\theoremstyle{definition}
\newtheorem{thm}{Theorem}[section]
\newtheorem{deff}[thm]{Definition}
\newtheorem{lemm}[thm]{Lemma}
\newtheorem{prop}[thm]{Proposition}
\newtheorem{cor}[thm]{Corollary}
\newtheorem{rem}[thm]{Remark}
\newtheorem{exam}{Example}
\newtheorem{prob}{Problem}

\newcommand{\zahl}{\mathbb{Z}}
\newcommand{\zhat}{\widehat{\zahl}}
\newcommand{\num}{\mathbb{N}}
\newcommand{\real}{\mathbb{R}}
\newcommand{\nhat}{\widehat{\num}}
\newcommand{\zring}{{\bf Z}}
\newcommand{\zn}{\zring_n}
\newcommand{\zm}{{\zring_m}}
\newcommand{\zp}{{\zring_p}}

\newcommand{\tow}{\uparrow}
\newcommand{\ttow}{\mathrel{\rotatebox[origin = c]{90}{$\twoheadrightarrow$}}}
\newcommand{\p}{\lambda}
\newcommand{\lcm}{{\rm lcm}}
\newcommand{\zmn}{\zring_{m,n}}
\newcommand{\od}{\kappa}

\newcommand{\ol}[1]{\overline{#1}}
\newcommand{\fct}[1]{\mathop{\mathrm{#1}}}
\newcommand{\id}{\fct{id}}

\title{Arithmetic Convergence of Double-iterated Polynomials}
\author{Rin Gotou
 \footnote{Department of Mathematics, Osaka University, 1-1 Machikaneyama, Toyonaka, Osaka, 560-0043, Japan} 
}
\date{}

\begin{document}
\maketitle
\begin{abstract}
Let $f$ be a polynomial with integer coefficients such that $f(n)$ positive for any positive integer $n$.
We consider diverging sequences $\{ y_n\}$ given by $y_0 = b$ and $y_{n+1} = f^{y_n}(a)$ with positive integers $a$ and $b$. We show such a sequence converges in $\widehat{\mathbb{Z}}$ and the limit is independent of $b$, if and only if $f$ does not become a permutation of length $p$ on $\mathbb{Z}/p\mathbb{Z}$ for any prime number $p$. We also show that $b'$-adic asymptotic approximations of the equation $f^y(a) = y$ holds in $\mathbb{N}$ for some bases $b'$.
\end{abstract}

\let \thefootnote \relax \footnotetext{ \texttt{u661233h@ecs.osaka-u.ac.jp}}

\section{Introduction}
 In \cite{J-Y}, J. Jim\'{e}nez-Urroz and Yebra proved the following result.
\begin{thm}([J-Y, Theorem 1])
 For any positive integers $a$ and $b$, there exists a positive integer $x$ such that
\begin{equation}
a^x \equiv_{b} x \quad ({\rm i.e.} \quad a^x \equiv x \mod b). \label{axbnx}
\end{equation}
 Moreover, if $b$ is \emph{valid} (i.e. for every pair of prime numbers $p$ and $q$ such that $p\mid b$ and $q\mid p-1$, we have $q \mid b$), then there exists a sequence of positive integers $\{x_n\}$ such that
\begin{equation}
a^{x_n} \equiv_{b^n} x_n \ \ {\rm and} \ \ x_n = c_n b^{n-1} + x_{n-1}\ \ \ (0 \leq c_n < b) \nonumber
\end{equation}
for every $n$. 
\end{thm}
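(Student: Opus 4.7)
The natural approach is to construct $x$ as a value of the iterated exponential tower defined by $T_1 = a$ and $T_{k+1} = a^{T_k}$, observing that this sequence $(T_k)$ stabilizes modulo any fixed $b$, and then taking $x = T_k$ for $k$ beyond the stabilization threshold.

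To prove the stabilization claim I would use strong induction on $b$. Split $b = b_1 b_2$ by the Chinese Remainder Theorem so that $\gcd(a, b_1) = 1$ and every prime divisor of $b_2$ divides $a$. On the parasitic factor, $a^y \equiv 0 \pmod{b_2}$ for all sufficiently large $y$, so $T_{k+1} \equiv 0 \equiv T_k \pmod{b_2}$ once $k$ is large. On the coprime factor, Euler's theorem reduces the dependence of $a^y \pmod{b_1}$ to the class of $y \pmod{\varphi(b_1)}$; since $\varphi(b_1) < b$, the inductive hypothesis gives stabilization of $T_k \pmod{\varphi(b_1)}$, and hence of $T_{k+1} \pmod{b_1}$. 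The base case $b = 1$ is trivial. Combining via CRT, the equation $a^x = T_{k+1} \equiv T_k = x \pmod{b}$ is then immediate.

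For the second statement, the key arithmetic input is that if $b$ is valid then every prime divisor of $\varphi(b)$ also divides $b$: the prime factors of $\varphi(b) = \prod p_i^{e_i - 1}(p_i - 1)$ are either primes $p_i$ dividing $b$, or primes dividing some $p_i - 1$, and validity forces the latter into $b$ as well. Hence $\varphi(b) \mid b^C$ for a fixed $C$, and iterates of $\varphi$ stay within the same prime support. Running the previous induction modulo $b^n$ in place of $b$, the stabilization threshold $k(n)$ for the tower grows in a controlled way with $n$.

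I would then define $x_n$ to be the stable residue of $T_k$ modulo $b^n$ represented in $[0, b^n)$. The crucial point is that the stable residues for different $n$ all come from the \emph{same} tower, so $x_n \equiv x_{n-1} \pmod{b^{n-1}}$ is automatic, which gives the desired expansion $x_n = c_n b^{n-1} + x_{n-1}$ with $0 \leq c_n < b$. The congruence $a^{x_n} \equiv x_n \pmod{b^n}$ again follows from $T_{k+1} \equiv T_k \pmod{b^n}$ for large $k$. The main obstacle is the bookkeeping in this lift: one must verify that the CRT split and the parasitic-factor argument interact correctly with the compatibility across different $n$, which is exactly what validity provides by keeping iterated Euler functions inside the prime support of $b$.
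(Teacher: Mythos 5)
The paper does not actually prove this statement; it is quoted from [J--Y], and the paper instead proves the generalizations, Theorems \ref{main1} and \ref{main2}, via the profinite machinery of Sections 2--4. Your first paragraph is a correct and complete outline of the existence of $x$: it is exactly the Shapiro--Shapiro tower argument (Theorem 1.2 of the paper), with the CRT split of $b$ into the part coprime to $a$ and the ``parasitic'' part, and strong induction through $\varphi(b_1)<b$. No objection there.

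The second half, however, has a genuine gap at the step ``the congruence $a^{x_n}\equiv x_n \pmod{b^n}$ again follows from $T_{k+1}\equiv T_k\pmod{b^n}$ for large $k$.'' From $x_n\equiv T_k\pmod{b^n}$ you must still deduce $a^{x_n}\equiv a^{T_k}\pmod{b^n}$, i.e.\ you need the period of $y\mapsto a^y\bmod b^n$ (for large $y$) to divide $b^n$. Your stated justification --- validity keeps the prime support of iterated Euler functions inside that of $b$, so $\varphi(b)\mid b^C$ --- only yields $\varphi(b^n)=b^{n-1}\varphi(b)\mid b^{\,n+C-1}$, and a congruence modulo $b^n$ does not imply one modulo $b^{\,n+C-1}$. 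What is actually needed is the valuation-by-valuation estimate: for $p^e$ exactly dividing $b$ one has $\mathrm{ord}_{p^{en}}(a)\mid (p-1)\,p^{\,en-1}$, and validity forces every prime power dividing $p-1$ to divide $b^{\,n-1}$ once $n$ is large, so the period modulo $b^n$ divides $b^{\,n-1}$ for $n\gg 0$. This is precisely the content of the paper's Proposition 4.5 (and of the corresponding lemma in [J--Y]); without it the lift does not close. Two smaller repairs in the same vein: since this divisibility only holds for $n\gg 0$, the congruence can fail for small $n$ if $x_n$ is taken as the reduced representative in $[0,b^n)$ --- the fix, used in the paper's proof of Theorem \ref{main2}, is to let $x_n$ equal one fixed large truncation for all small $n$, which still satisfies the digit recursion with $c_n=0$. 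And on the parasitic factor you must check that $x_n$ itself, not merely $T_k$, exceeds the threshold beyond which $a^y\equiv 0$ holds; this is the tail-length condition that the paper handles via Lemma 4.6 and inequality (\ref{xnodbn}).
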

 For example, they gave the case $a=7$ and $b=10$ as the following:
\begin{eqnarray}
7^3 &\equiv_{10}& 3, \nonumber \\
7^{43} &\equiv_{10^2}& 43, \nonumber \\
7^{343} &\equiv_{10^3}& 343, \nonumber \\
&\vdots& \nonumber \\
7^{\cdots 3643331265511565172343} &\equiv_{10^n}& \cdots 3643331265511565172343, \nonumber\\
&\vdots& . \nonumber
\end{eqnarray}
 The proof of Theorem 1.1 was constructive, that is, done by giving an algorithm to obtain $x$. In \cite{S-S}, D. B. Shapiro and S. D. Shapiro proved the former part of Theorem 1.1 independently, affording a more explicit form of the same algorithm. They showed that
\begin{thm}([S-S, Corollary 2.11]) For any positive integers $a$ and $b$, the sequence $a$, $a^a$, $a^{a^a}$, $a \tow \tow 4$, ..., $a \tow \tow n$,... become stable modulo $b$ for $n \gg 0$, where $a \tow \tow n$ is Knuth's up-arrow notation introduced in \cite{K}. Equivalently, the sequence converges in the ring $\zahl_p$ of $p$-adic integers for every prime number $p$.
\end{thm}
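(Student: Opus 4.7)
The plan is to prove, by strong induction on the modulus $b$, that the tower $a\tow\tow n$ stabilizes modulo $b$; by the Chinese Remainder Theorem this reduces immediately to the case of prime powers, and the equivalent $p$-adic formulation follows because convergence modulo all $p^k$ is precisely Cauchy-ness in $\zahl_p$. The base case $b = 1$ is vacuous.

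For the inductive step, fix $b > 1$ and a prime power $p^k$ dividing $b$. I would split into two cases according to whether $p$ divides $a$. If $p \mid a$, then the identity $v_p(a\tow\tow n) = (a\tow\tow (n-1)) \cdot v_p(a)$ shows that the $p$-adic valuation of the tower tends to infinity with $n$, so $a\tow\tow n \equiv 0 \pmod{p^k}$ for all sufficiently large $n$, and the sequence is eventually constant modulo $p^k$.

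If instead $\gcd(a,p) = 1$, then $a$ is a unit modulo $p^k$ and Euler's theorem gives that $a^x \bmod p^k$ depends only on $x \bmod \phi(p^k)$. Since $\phi(p^k) = p^{k-1}(p-1) < p^k \leq b$, the strong inductive hypothesis applies to the modulus $\phi(p^k)$, so $a\tow\tow n$ stabilizes modulo $\phi(p^k)$; applying the exponential map $x \mapsto a^x$ then yields that $a\tow\tow(n+1)$ stabilizes modulo $p^k$, closing the induction.

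The main obstacle is essentially bookkeeping: one must ensure that each application of the inductive step strictly reduces the modulus, which is guaranteed by the elementary inequality $\phi(p^k) < p^k$ for $p^k > 1$, so the induction indeed terminates at the trivial base case. Notably, no hypothesis on $b$ (such as being valid, as in Theorem 1.1) is required here, because the case $p \mid a$ is handled directly by valuation growth rather than by appeal to the recursion, and the case $\gcd(a,p) = 1$ uses only the unit structure of $(\zahl/p^k\zahl)^\times$.
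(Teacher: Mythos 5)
Your argument is correct, and it is a genuinely different route from the one the paper takes. The paper never proves Theorem 1.2 directly: it quotes it from [S-S] and then re-derives it as a special case of Theorem \ref{main1}, after observing that $f(x)=ax$ is tower-stable (its reduction mod $p$ fixes $0$, so it is never a cyclic permutation of length $p$); this invokes the entire profinite apparatus of Sections 2--3 (the semigroup $\nhat$, period maps, congruence stability, Theorem \ref{convzhat}). Your proof is the classical elementary induction on the modulus, with the two cases $p\mid a$ (the $p$-adic valuation $v_p(a\tow\tow n)=(a\tow\tow(n-1))\,v_p(a)$ blows up, forcing the tower to vanish mod $p^k$ eventually) and $\gcd(a,p)=1$ (Euler's theorem reduces the modulus to $\phi(p^k)<p^k$, where the strong induction hypothesis applies). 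In the paper's language you are computing the period map of $f(x)=ax$ explicitly: $\p_f(p^k)$ divides $\phi(p^k)$ when $p\nmid a$ and equals $1$ when $p\mid a$, and your strict inequality $\phi(p^k)<p^k$ is exactly what makes the iterated period map collapse to $1$, i.e.\ the content of Lemma \ref{per}(iii') in this special case. What your approach buys is brevity and self-containedness; what the paper's buys is generality --- for an arbitrary tower-stable polynomial the descent of the period map is no longer automatic and is precisely what the tower-stability hypothesis must supply --- together with the stronger conclusion of Theorem \ref{main1} that the limit is independent of the starting point of the outer iteration. Two minor points of bookkeeping: your valuation argument tacitly assumes $a\ge 2$ so that the tower grows (for $a=1$ the sequence is constant and there is nothing to prove), and the CRT reduction should be applied to the maximal prime powers $p^k$ exactly dividing $b$, taking the maximum of the finitely many stabilization indices; both are trivially repaired.
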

 This implies $x = a\tow \tow n$ for $n \gg 0$ satisfies the congruence (\ref{axbnx}). In addition, this allows to state the above example as
\begin{equation}
\lim_{n \to \infty}7 \tow \tow n = \cdots3643331265511565172343 \quad {\rm in} \quad \zahl_2 \times \zahl_5. \nonumber
\end{equation} 

  We regard the above two theorems as results on the polynomial $ax \in \zahl[x]$ from the viewpoint of dynamical systems. Let $X$ be a set and $x$ an element of $X$. For a map $f:X \to X$, we denote $f^n(x)=f \circ f \circ \cdots \circ f(x)$ by $f \tow_x (n)$ and thus obtain a map $f\tow_x :\num \to X$ where $\num$ is the set of the positive integers. If $X=\num$, then for every positive integer $a$, we obtain $f \tow_a:\num \to \num$ and therefore we can construct $f \tow_a \tow_b$. If $f(x) = ax$, then we have $f \tow_1 (n) =a^n = a \tow n$ and $f \tow_1 \tow_1 (n) = (f \tow_1) \tow_1(n)= a \tow \tow n$.

 The purpose of this paper is to generalize Theorems 1.1 and 1.2 to more general polynomials in $\zahl[x]$. We shall prove the following:
\begin{thm}\label{main1}
Let $f$ be a polynomial of one variable with integer coefficients and we assume $f(\num) \subseteq \num$. Then the following conditions are equivalent to each other:
\begin{enumerate}
\item For any prime number $p$, the reduction map $f_p : \zahl/p\zahl \to \zahl/p\zahl$ is not a cyclic permutation of length $p$.
\item For any $a,b \in \num$, if $\lim_{n \to \infty} f \tow_a \tow_b(n) = \infty$ in $\mathbb{R}$, then the limit $\lim_{n \to \infty} f \tow_a \tow_b (n)$ exists in $\zahl_p$ for every prime number $p$ and is independent of $b$.
\end{enumerate}
\end{thm}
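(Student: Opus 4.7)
My plan splits into the two implications.

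For (ii) $\Rightarrow$ (i), I argue by contrapositive. Suppose $f_p$ is a $p$-cycle on $\zahl/p\zahl$ for some prime $p$. Because $f_p$ has order $p$ as a permutation, the map $\phi : \zahl/p\zahl \to \zahl/p\zahl$ defined by $\phi(c) = f^c(a) \bmod p$ is well-defined, and it is a permutation since the $p$-cycle assumption forces $\{f(a), f^2(a), \ldots, f^p(a)\}$ to cover $\zahl/p\zahl$. The recursion then reads $y_n \equiv \phi^n(b) \pmod{p}$. Since $f_p$ is non-trivial, $f$ is non-constant, so one may arrange $y_n \to \infty$ in $\real$ by taking $a$ and $b$ large. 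If $\phi$ has a cycle of length $\geq 2$, choose $b$ so that $b \bmod p$ sits on that cycle; then $y_n \bmod p$ is periodic and divergent in $\zahl_p$. If instead $\phi$ is the identity, then $y_n \equiv b \pmod{p}$ for all $n$, and varying $b$ across residues produces $p$ distinct limits. Either way (ii) fails.

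For (i) $\Rightarrow$ (ii), I prove by strong induction on $N \geq 1$ that $y_n \bmod N$ is eventually constant with value independent of $b$. The central estimate is that under (i), the eventual period $T(a,N)$ of $f^m(a) \bmod N$ satisfies $T(a,N) \leq \varphi(N) < N$ for $N \geq 2$. The prime case $T(a,p) \leq p-1$ is immediate from (i). For prime powers I use the standard Taylor-expansion lift: passing (by replacing $a$ with a late iterate) to the periodic regime mod $p^e$, write $T' = T(a, p^{e-1})$ and $f^{T'}(a) = a + p^{e-1} s$; Taylor expansion gives $(f^{T'})^k(a) \equiv a + p^{e-1} s (1 + \beta + \cdots + \beta^{k-1}) \pmod{p^e}$ with $\beta = (f^{T'})'(a) \bmod p$. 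A case analysis on $\beta$ and on whether $p \mid s$ shows the minimal valid $k$ lies in $\{1\} \cup \{\text{divisors of } p-1 \text{ exceeding } 1\} \cup \{p\}$, whence $T(a, p^e) \leq p \cdot T(a, p^{e-1})$. Iteration on $e$ gives $T(a, p^e) \leq (p-1) p^{e-1} = \varphi(p^e)$, and CRT extends to $T(a, N) \leq \varphi(N) < N$.

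The induction itself is now short. For $N \geq 2$: since $y_n \to \infty$, eventually $y_n$ exceeds the pre-period of $f^m(a) \bmod N$, so $f^{y_n}(a) \bmod N = \psi_N(y_n \bmod T(a,N))$ for a fixed function $\psi_N$ determined by $f$, $a$, $N$. Applying the inductive hypothesis to the smaller modulus $T(a,N) < N$, the residue $y_n \bmod T(a,N)$ stabilizes at some $c$ independent of $b$; hence $y_{n+1} \bmod N$ stabilizes at $\psi_N(c)$, also independent of $b$. This establishes convergence in $\zahl_p$ to a $b$-independent limit for every prime $p$.

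The main obstacle is the Hensel-type lifting. The delicate case is $\beta \equiv 0 \pmod{p}$ with $p \nmid s$, which at first appears to obstruct periodicity mod $p^e$; one shows this case cannot arise when $a$ is genuinely periodic mod $p^e$, justifying the initial reduction to a late iterate.
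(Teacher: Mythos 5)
Your proposal is correct, but your proof of the main implication (i)~$\Rightarrow$~(ii) takes a genuinely different and more elementary route than the paper's. The paper builds the profinite completion $\nhat$ of $\num$, extends $f\tow_a$ to continuous self-maps of $\zhat$, introduces period maps and congruence stability, shows that for tower-stable $f$ the iterated period map satisfies $\lambda_f^k(n)=1$ for $k\gg0$ (Lemma \ref{per}), concludes that $f\ttow_s\ttow_t$ is constant (Theorem \ref{convzhat}), and finally transfers this back to integer starting points $b$ via a compactness argument on $\bigcap_n F^n(\zhat)$. You instead run a single strong induction on the modulus $N$, driven by the quantitative bound $T(a,N)\le\varphi(N)<N$ on the eventual period of $f^m(a)\bmod N$; this one-step descent replaces both the iteration of $\lambda_f$ and the compactness argument, and it handles the independence of $b$ directly inside the induction. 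Your Taylor-expansion lifting of the period from $p^{e-1}$ to $p^e$ is essentially the paper's Theorem \ref{polyper}, which the paper proves only in Section 4 and uses only for Theorem \ref{main2}, not for Theorem \ref{main1}; your observation that the case $\beta\equiv_p 0$, $p\nmid s$ cannot occur for a genuinely periodic point (since then $f^{T'}(a)$ would be a fixed point of $f^{T'}$ modulo $p^e$ distinct from $a$) is exactly the right fix and should be spelled out in a full write-up. What the paper's heavier machinery buys is the stronger profinite statement (convergence of $f\ttow_s\ttow_t$ for arbitrary $s,t\in\zhat$) and the framework reused for Theorem \ref{main2}; what your argument buys is a short, self-contained proof that never leaves $\zahl/N\zahl$. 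Your (ii)~$\Rightarrow$~(i) direction is essentially the paper's: the same contrapositive via the induced permutation $(f\tow_a)_p$ of $\zp$, with a slightly different but equally exhaustive case split (a cycle of length $\ge 2$ versus the identity, in place of the paper's one cycle versus several cycles).
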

 We call a polynomial $f \in \zahl[x]$ \emph{tower-stable} if $f$ satisfies condition (i) in the above theorem (this definition will be generalized in Definition \ref{ts}).
 Note that for every $a \in \num$, $f(x)=ax$ is tower-stable because $f(0)=0$. Therefore Theorem 1.3 implies Theorem 1.2.
\begin{deff}
Let $f$ be a tower-stable polynomial. A positive integer $b$ is said to be $f$-$valid$ if $b$ is square-free, valid and for every pair of prime numbers $p$ and $q$ such that $p \mid b$ and  $q  \mid  \p_f(p)$, it holds $q \mid b$, where $\p_f(p)$ is the period of the reduction map $f_p : \zahl/p\zahl \to \zahl/p\zahl$. 
\end{deff}
 With this refinement of valid numbers, we state the following generalization of the latter part of Theorem 1.1.
\begin{thm}\label{main2}
 Let $f$ be a tower-stable polynomial. If $b$ is $f$-valid, there exists a sequence of positive integers $\{x_n\}$ such that
\begin{equation}
f^{x_n}(a) \equiv_{b^n} x_n \ \ {\rm and}\ \ x_n = c_n b^{n-1} + x_{n-1}\ \ \ (0 \leq c_n < b). \nonumber
\end{equation}
\end{thm}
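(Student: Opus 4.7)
Plan. I would build $\{x_n\}$ from the $\widehat{\zahl}$-limit $x_\infty := \lim_{k\to\infty} f\tow_a\tow_b(k)$ furnished by Theorem~\ref{main1} (assuming $f\tow_a\tow_b(k)\to\infty$ in $\real$; the bounded case, where the iteration eventually stabilizes in $\num$, is handled separately by a constant sequence). Projecting $x_\infty$ to $\varprojlim_n \zahl/b^n\zahl = \prod_{p\mid b}\zahl_p$, using the square-freeness of $b$, I would expand it $b$-adically as $x_\infty = \sum_{i\geq 0} d_i b^i$ with digits $d_i\in\{0,1,\ldots,b-1\}$ and set $x_n := \sum_{i=0}^{n-1} d_i b^i$. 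The relation $x_n = c_n b^{n-1} + x_{n-1}$ with $c_n := d_{n-1}$ is then built in.

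The substantive work is the congruence $f^{x_n}(a)\equiv x_n \pmod{b^n}$. By CRT this reduces to the same congruence modulo $p^n$ for each prime $p\mid b$. The orbit of $a$ under $f$ modulo $p^n$ is eventually periodic with some period $T_p(n)$, and the $f$-valid hypothesis is calibrated so that every prime factor of $T_p(n)$ divides $b$: the condition on $\p_f(p)$ controls the cycle lengths in the combinatorics of $f_p$, while the classical ``valid'' component controls the prime factors of $p-1$ arising in multiplicative orders in $(\zahl/p^n\zahl)^\times$. Choosing $k$ large enough that $y_k := f\tow_a\tow_b(k) \equiv x_\infty$ modulo both $b^n$ and $T_p(n)$, one has $f^{y_k}(a) = y_{k+1} \equiv y_k \equiv x_n \pmod{p^n}$, so the desired congruence follows once $f^{x_n}(a) \equiv f^{y_k}(a) \pmod{p^n}$, which is the same as $x_n \equiv y_k \pmod{T_p(n)}$.

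\textbf{Main obstacle.} This last congruence is the crux: since $T_p(n)$ need not divide $b^n$ (in the classical $a=7,\ b=10$ example of \cite{J-Y}, one has $v_2(T_5(1))=2>1=v_2(b)$), it does not follow from $x_n \equiv x_\infty \pmod{b^n}$ alone. It must instead be deduced from the self-consistency $f^{x_\infty}(a)=x_\infty$ in $\widehat{\zahl}$, which, as a fixed-point condition in each $\zahl_p$, constrains the higher-order $b$-adic digits of $x_\infty$ to contribute exactly the right amount to upgrade the $b^n$-approximation to a $T_p(n)$-approximation. Making this rigorous---via a quantitative refinement of the convergence analysis underlying Theorem~\ref{main1}---is the technical heart of the proof.
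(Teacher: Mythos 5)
Your construction --- truncating the $b$-adic expansion of the limit $x=\lim_k f\tow_a\tow_b(k)$, which satisfies the fixed-point equation $f^x(a)=x$ in $\zhat$ --- is exactly the paper's, and you have correctly isolated the congruence $f^{x_n}(a)\equiv_{b^n}x_n$ as the substance. But your diagnosis of the ``main obstacle'' is misplaced, and you miss a second obstacle that is the actual technical heart of the argument. On the period congruence: no appeal to the self-consistency of higher $b$-adic digits is needed. The paper's Proposition 4.5 shows that $f$-validity directly forces $\p_f(b^{n+1})\mid b^n$ for all sufficiently large $n$: by Theorem \ref{polyper} one has $\p_f(p^{n+1})\mid \p_f(p)\cdot(p-1)\cdot p^n$, and $f$-validity says precisely that every prime factor of $\p_f(p)\cdot(p-1)$ divides $b$, so once $n$ exceeds the largest exponent occurring in these factorizations, $\p_f(p)\cdot(p-1)\cdot p^n$ divides $b^n$. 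Your $v_2$ counterexample at $n=1$ only obstructs finitely many $n$, and the theorem tolerates that: the paper simply freezes the sequence at $x_{N|}$ for $n\le N$. Hence $x_n\equiv_{b^n}x$ already yields $x_n\equiv_{\p(b^n)}x$ for $n\gg0$, with no further input, and then $f^{x_n}(a)\equiv_{b^n}f^x(a)=x\equiv_{b^n}x_n$.

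The genuine gap is the preperiod. Deducing $f^{x_n}(a)\equiv_{p^n}f^{x}(a)$ from $x_n\equiv x \pmod{\p(p^n)}$ is legitimate only when $x_n$ exceeds the tail length $\od(p^n)$ of the orbit of $a$ modulo $p^n$. For a general tower-stable polynomial the reduction modulo some primes is necessarily non-injective (that is what tower-stability buys you), so these tails are nonempty and grow with $n$; for $f(x)=ax$ with $\gcd(a,b)=1$ the issue is invisible, which may be why you did not see it. The paper handles it by proving the linear bound $\od(p^n)\le\od(p)+(n-1)\p(p)$ when $\mu_p\equiv_p0$ (Theorem \ref{polyper}), showing that in that case the $p$-adic limit is algebraic over $\mathbb{Q}$ (Proposition 4.4), and deducing that the truncations $x_{n|}$, if not eventually constant, grow super-linearly and hence eventually dominate $\od(b^n)$ (Lemma 4.6); the eventually constant case is repaired by replacing $x_{n|}$ with $x'+b^n$. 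None of this circle of ideas appears in your proposal, so even granting your period analysis, the step $f^{x_n}(a)\equiv_{p^n}f^{y_k}(a)$ remains unjustified.
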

For example, we choose a polynomial $f(x)=x^2+x+3$ and $b=10$. $f$ is tower-stable because $f(0) = f(-1)=3$ makes no reductions injective. We see $b=10$ is $f$-valid. It is easy to see $10$ is valid: if $p \mid 10$ and $q \mid (p-1)$ then $(p,q) = (5,2)$ and $q \mid 10$. Moreover, by some direct computation, we can see $\p_f(5)=2$ and $\p_f(2)=1$, this leads that $10$ is $f$-valid.
\begin{eqnarray}
f^3(0) &\equiv_{10}& 3, \nonumber \\
f^{43}(0) &\equiv_{10^2}& 43, \nonumber \\
f^{243}(0) &\equiv_{10^3}& 243, \nonumber \\
&\vdots & \nonumber \\	
f^{\cdots 636048243}(0) &\equiv_{10^n}& \cdots 636048243, \nonumber \\
&\vdots &. \nonumber
\end{eqnarray}

The outline of the paper is as follows. In Section 2, We extend maps on $\num$ to some maps on $\zhat$, the profinite complation of $\zahl$. Using these extensions, we discuss dynamical systems on $\zhat$ in Section 3 and prove profinite version Theorem \ref{convzhat} of Theorem \ref{main1}, then obtain Theorem \ref{main1} as a corollary. In Section 4, we give a more precise evaluation of order of convergence to show Theorem \ref{main2}.

\subsection*{Acknowledgement}
Takao Watanabe encouraged me to try to write a paper and gave some valid comments for drafts. Takehiko Yasuda and Seidai Yasuda proofread this paper and gave warm encouragement. Without their contributions, this paper cannot be materialized. I would like to express my greatest appreciation to them.

\subsection{Notation}
 \begin{tabular}{ll}
$\num$ & the set of the all positive integers, $\{1,2,3,...\}$ \\
$[n]$ & the set $\{1,2,3,...,n\}$ \\
$a  \mid  b$ & $a$ divides $b$ \\
$a \equiv_n b$ & $a$ is congruent to $b$ modulo $n$\\
$\zn$ & the ring $\zahl/n\zahl$ \\
$\ol{a}$ & the residue class of $a$\\
$f_n$ & the map $\zm \to \zn$ for some $m$ induced by $f$, mostly $m=n$\\
$f^n$ & the $n$-th iteration of $f$\\
$f \tow_a (n)$ & $f^n(a)$\\
 \end{tabular}

\section{Profinite Completion of $\num$}

 Let $S$ be a finite quotient semigroup of $(\num,+)$ and $\pi:\num \to S$ be the quotient map. The semigroup $S$ is generated by $\pi(1)$. Since $S$ is finite, the set $P_S:=\{(k,l) \in \num^2 \mid \pi(k) = \pi(k+l)\}$ is nonempty. Let $(m,n)=(m_S,n_S)$ be a pair of positive integers such that $n:=\min \{ l \mid$ there exists $k$ such that $(k,l) \in P_S \}$ and $m:= \min \{k \mid (k,n)\in P_S \}$. Then $S$ is uniquely determined by $m$ and $n$ up to isomorphism and identified with the semigroup $\zmn :=([m-1] \cup \zn , \dot{+})$ where the operator $\dot{+}$ is defined as
\[a\, \dot{+} \, b =
	\begin{cases}
		a+b \in [m-1] & ({\rm if} \:\: a,b \in [m-1]\: {\rm and} \:\: a+b < m),\\
		\ol{a+b} \in \zn & ({\rm otherwise})
	\end{cases}
\]
and the quotient map $\pi_{m,n}: \num \to \zmn$ is given by
\[\pi_{m,n}(a) =
	\begin{cases}
		a \in [m-1] & (a <m),\\
		\ol{a} \in \zn & (a \geq m).
	\end{cases}
\]
 If $m \leq k$ and $n \mid l$, there is natural homomorphism $\phi_{(k,l),(m,n)}:\zring_{k,l}\to \zmn$. Thus $(\{ \zmn \},\{ \phi_{(k,l),(m,n)} \} )$ is a projective system and we define $\nhat$, the profinite completion of $\num$ as the projective limit of $(\zmn,\phi_{(k,l),(m,n)})$. The semigroup $\nhat$ has the following description with $\zhat :=\varprojlim_{n} \zn$:
\begin{prop} \label{constnhat}
We have $\nhat \cong (\num \cup \zhat\, ,\widehat{+})$ where
\[a\, \widehat{+} \, b = 
	\begin{cases}
		a+b \in \num & (a,b \in \num)\\
		a+b \in \zhat & ({\rm otherwise}).
	\end{cases}
\]
\end{prop}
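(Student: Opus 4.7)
The plan is to exhibit an explicit semigroup isomorphism $\Phi : \num \cup \zhat \to \nhat$ and verify its properties. On the $\num$ part, I will set $\Phi(a) = (\pi_{m,n}(a))_{(m,n)}$, which is automatically a compatible family in the projective system. On the $\zhat$ part, given $x = (x_n)_n \in \varprojlim_n \zn$, I will set $\Phi(x)$ to be the family whose $(m,n)$-component is $x_n$, viewed as an element of the $\zn$-piece of $\zmn$; compatibility follows because $\phi_{(k,l),(m,n)}$ restricted to the $\zring_l$-subset of $\zring_{k,l}$ is the ordinary reduction $\zring_l \to \zn$.

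To show $\Phi$ is a bijection, I would first observe that the images $\Phi(\num)$ and $\Phi(\zhat)$ are disjoint in $\nhat$: for any $a \in \num$ and any coordinate $(m,n)$ with $m > a$, the entry $\Phi(a)_{m,n} = a$ lies in the $[m-1]$-part, whereas $\Phi(x)_{m,n}$ always lies in $\zn$. Injectivity on each piece is then routine. For surjectivity, I take a compatible family $\alpha = (a_{m,n})_{(m,n)} \in \nhat$ and split into two cases. If $a_{m,n} \in \zn$ for every $(m,n)$, then $(a_{1,n})_n$ defines an element $x \in \zhat$ with $\alpha = \Phi(x)$. Otherwise some $a_{m,n} = a$ lies in $[m-1]$, and I claim $\alpha = \Phi(a)$: for any $(k, l)$ with $k \ge m$ and $n \mid l$, the requirement $\phi_{(k,l),(m,n)}(a_{k,l}) = a \in [m-1]$ forces $a_{k,l}$ to lie in the $[k-1]$-part of $\zring_{k,l}$ (since elements of the $\zring_l$-subset are sent into $\zn$), and the only such element reducing to $a$ is $a$ itself. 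The remaining coordinates are then recovered from this anchor via the structure maps.

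Finally, I would verify that $\Phi$ is a semigroup homomorphism by checking componentwise that $\dot{+}$ on $\zmn$ agrees with $\widehat{+}$ after applying $\Phi$. A case analysis on whether the summands come from $\num$ or from $\zhat$, and on the sizes of any integer summands compared to $m$, reduces everything to the two branches of the definition of $\dot{+}$: the ``otherwise'' branch covers every mixed or large-sum case, while the first branch handles two integers both below $m$ with sum still below $m$. The step that I expect to require the most care is the surjectivity analysis above, i.e.\ making precise how a single occurrence $a_{m,n} \in [m-1]$ pins down every other component. The decisive asymmetry is that integers in $[k-1]$ can map into either portion of $\zmn$ depending on size, whereas elements of $\zring_l$ can never map into $[m-1]$; this one-sided behaviour of the transition maps is what guarantees that the anchor propagates uniquely.
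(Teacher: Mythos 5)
Your argument is correct. It differs from the paper's proof mainly in organization rather than in substance. The paper factors each transition map as $\phi_{(k,l),(m,n)} = \phi_{(k,n),(m,n)} \circ \phi_{(k,l),(k,n)}$ and computes $\nhat$ as an iterated limit $\varprojlim_n \varprojlim_m \zmn$: it first identifies the inner limit $\varprojlim_m \zmn$ with $(\num \cup \zn, \dot{+})$ by writing down a pair of mutually inverse maps, and then disposes of the outer limit abstractly via $\varprojlim_n \zn = \zhat$. You instead construct a single map $\Phi$ on the full doubly indexed system and check injectivity, surjectivity and the homomorphism property directly. The mathematical core is the same in both: everything rests on the one-sided behaviour of the transition maps (an element of the $\zring_l$-part of $\zring_{k,l}$ can never land in $[m-1]$, while an integer in $[k-1]$ lands in $[m-1]$ exactly when it is $<m$). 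In the paper this appears as the dichotomy ``$a_m \in [m-1]$ for all $m$'' versus ``otherwise'' in the description of the inverse of the inner-limit isomorphism; in your proof it appears as the anchor-propagation step of the surjectivity analysis. The paper's route is marginally shorter because the outer limit requires no new work; yours is more self-contained and makes explicit (via the directedness of the index set) exactly how a single coordinate in $[m-1]$ determines the whole compatible family, which the paper leaves implicit.
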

\begin{proof} We can decompose every map $\phi_{(k,l),(m,n)}$ as $\phi_{(k,l),(m,n)} = \phi_{(k,n),(m,n)}  \circ \phi_{(k,l),(k,n)}$. Therefore the definition of projective limit allows us to compute $\nhat$ as
\[\varprojlim \zmn = \varprojlim_n \varprojlim_m \zmn. \]
 First, we show $\varprojlim_{m} \zmn \cong (\num \cup \zn , \dot{+})$, where 
\[a\, \dot{+} \, b = 
	\begin{cases}
		a+b \in \num & (a,b \in \num)\\
		\ol{a+b} \in \zn & ({\rm otherwise}).
	\end{cases}
\]
 Indeed we can construct maps which are the inverses of each other: one is given by
\[\varprojlim _m \zmn \ni a = (a_m)_{m\in \num} \mapsto
	\begin{cases}
		a_m \in \num & ({\rm if \ } a_m \in [m-1]\ {\rm for \ any \ }m)\\
		a_1 \in \zn & ({\rm otherwise}).
	\end{cases}
\]
 The other is given by:
\[\num \cup \zn \ni a \mapsto
	\begin{cases}
		(\pi_{m,n}(a))_{m\in \num} & ({\rm if \ } a \in \num)\\
		(a)_{m \in \num} & ({\rm if \ } a \in \zn).
	\end{cases}
\]
 Then it remains to show $\varprojlim_{n} (\num \cup \zn , \dot{+}) = (\num \cup \zhat, \widehat{+})$, which follows from $\varprojlim_{n} \zn = \zhat$.
\end{proof}
 Every projection $\pi_{m,n}:\num \to \zmn$ defines multiplication on $\zmn$. We regard $\zmn$ as a topological semiring with discrete topology. Then, $\nhat$ becomes a compact Hausdorff semiring by the projective limit topology. Note that a sequence $n_1, n_2,...$ of natural numbers converges to $s \in \zhat$ in $\nhat$ if and only if $n_i$ tends to infinity in $\mathbb{R}$ and the sequence $\widehat{n_i}$ converges to $s$ in $\zhat$ (with respect to the standard topology of $\zhat$), where $\widehat{\cdot}:\num \to \zhat$ is the natural embedding.

\begin{prop}\label{profext}
  Let $X$ be a metric space and $f:\num \to X$ a map. Then the following conditions are equivalent:
\begin{enumerate}
\item The map can be extended to a continuous map $\widehat{f}:\nhat \to X$.
\item For every $s \in \zhat$ and every sequence of positive integers $\{n_i \}_{i\in \num}$ that satisfies $n_i \to + \infty$ in $\mathbb{R}$ and $\widehat{n_i} \to s$ in $\zhat$ as $i \to \infty$, the limit $ \lim_{i \to \infty} f(n_i)$ exists in $X$ and is independent of $n_i$ (i.e. the limit $ \lim_{i \to \infty} f(n_i)$ depends only on $s$).
\end{enumerate}
\end{prop}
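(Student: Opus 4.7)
The plan is to handle (i) $\Rightarrow$ (ii) as essentially a restatement of the convergence characterization for $\nhat$ noted just above the proposition, and to prove (ii) $\Rightarrow$ (i) by explicitly constructing $\widehat{f}$ as a limit along approximating positive-integer sequences, then checking continuity via a diagonal argument.

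For (i) $\Rightarrow$ (ii): if $n_i$ is a sequence of positive integers with $n_i \to \infty$ in $\real$ and $\widehat{n_i} \to s$ in $\zhat$, the preceding remark says $n_i \to s$ in $\nhat$, so $f(n_i) = \widehat{f}(n_i) \to \widehat{f}(s)$ by continuity of $\widehat{f}$. The limit is therefore $\widehat{f}(s)$, independent of the particular sequence.

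For (ii) $\Rightarrow$ (i), I would define $\widehat{f}(n) = f(n)$ for $n \in \num$ and, for $s \in \zhat$, set $\widehat{f}(s) := \lim_i f(n_i)$ along any sequence of positive integers with $n_i \to \infty$ and $\widehat{n_i} \to s$; such sequences exist (take representatives of $s$ modulo an increasing sequence of moduli and shift by large multiples to force real divergence) and the limit is well-defined by (ii). Continuity then has to be verified. Since $\nhat$ is a countable inverse limit of finite discrete sets, it is compact Hausdorff and second countable, hence metrizable, and sequential continuity suffices; each $n \in \num$ is an isolated point of $\nhat$ (separated from the rest by $\pi_{m,m}$ for $m > n$) and presents no issue. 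For $s \in \zhat$ and a sequence $t_k \to s$ in $\nhat$, I would select positive integers $m_k$ as follows: when $t_k \in \num$, put $m_k = t_k$; when $t_k \in \zhat$, use the definition of $\widehat{f}(t_k)$ to pick $m_k \in \num$ with $m_k > k$, with $\widehat{m_k}$ agreeing with $t_k$ modulo an integer $N_k$ increasing in $k$, and with $d(f(m_k), \widehat{f}(t_k)) < 1/k$. From $t_k \to s$ in $\nhat$ one checks that $m_k \to \infty$ in $\real$ and $\widehat{m_k} \to s$ in $\zhat$, so (ii) yields $f(m_k) \to \widehat{f}(s)$; combining with $d(f(m_k), \widehat{f}(t_k)) \to 0$ gives $\widehat{f}(t_k) \to \widehat{f}(s)$, which is the required sequential continuity at $s$.

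The main obstacle I anticipate is the bookkeeping in this diagonal construction, since the $m_k$ must simultaneously diverge in $\real$, approximate $s$ in $\zhat$, and witness the defining limit of $\widehat{f}(t_k)$ to within $1/k$, all while $t_k$ may oscillate between $\num$ and $\zhat$. This is not deep — one simply adds a large multiple of the current modulus to shift $m_k$ in $\real$ without disturbing any residue class that has already been fixed — but it is the single step requiring explicit care.
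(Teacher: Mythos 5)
Your proof is correct and follows essentially the same route as the paper: define $\widehat{f}(s)$ as the limit along approximating positive-integer sequences and verify sequential continuity by a diagonal selection with divergence in $\real$, congruence modulo an increasing modulus (the paper uses $i!$; your $N_k$ should likewise be chosen so every fixed modulus eventually divides it, not merely increasing), and an $\varepsilon$-approximation of each $\widehat{f}(t_k)$. Your handling of approximating sequences $t_k$ with terms in $\num$ is a small extra case the paper omits, but it does not change the argument.
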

\begin{proof}(i) $\Rightarrow$ (ii) follows by the above remark about the convergence of sequence of positive integers on $\nhat$; we show (ii) $\Rightarrow$ (i). For every $s \in \zhat$, we take $\{ n_i\}_{i \in \num}$ as in the condition of (ii) and set $\widehat{f}(s) := \lim_{i \to \infty} f(n_i)$. Then $\widehat{f}:\nhat \to X$ is well-defined by assumption, we show $\widehat{f}$ is continuous. Since $\nhat$ is given by a projective limit of a countable system of finite sets, the topology of $\nhat$ is first countable. Thus it is enough to see $\widehat{f}$ is sequentially continuous. Let $\{s_i\}_{i \in \num} \subset \zhat$ be a sequence such that $\lim_{i\to \infty}s_i = s \in \zhat$. Let us denote the metric on $X$ by $d$ and fix an arbitrary $\varepsilon > 0$. We take sequences of positive integers $\{n_{ij}\}_{i,j \in \num}$ such that for every $i$, $n_{ij} \to s_i$ in $\nhat$ as $j \to \infty$. By (ii), we have $d(f(n_{ij}),f(s_i)) < \frac{\varepsilon}{2} \  (j \gg 0)$. Thus we can take a sequence of positive integers $\{j_i\}_{i \in \num}$ such that  $d(f(n_{ij_i}),f(s_i)) < \frac{\varepsilon}{2}$, $n_{ij_i} > i$ and $n_{ij_i} \equiv s_i \mod i!$. Here $n_{ij_i} \to +\infty$ in $\mathbb{R}$ and $n_{ij_i} \to s$ in $\zhat$ as $i \to \infty$, again by (ii) we have $d(f(n_{ij_i}),f(s)) < \frac{\varepsilon}{2} \ (i \gg0)$, thus $d(f(s_i),f(s))< \varepsilon\ (i \gg 0)$.
\end{proof}
\begin{cor}\label{profper}
  Let $X$ be a complete metric space, $f:X \to X$ be a continuous map and $x$ be a point in $X$. Then the following conditions are equivalent:
\begin{enumerate}
\item The map $f\tow_x:\num \to X$ can be extended to a continuous map $\widehat{f \tow_x}:\nhat \to X$.
\item For every $s \in \zhat$ and every sequence of positive integers $\{n_i \}_{i\in \num}$ that satisfies $n_i \to + \infty$ in $\mathbb{R}$ and $\widehat{n_i} \to s$ in $\zhat$ as $i \to \infty$, the limit $ \lim_{i \to \infty} f^{n_i}(x)$ exists in $X$ and is independent of $n_i$ (i.e. limit $ \lim_{i \to \infty} f^{n_i}(x)$ depends only on $s$).
\end{enumerate}
\end{cor}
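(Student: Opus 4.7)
The plan is to deduce this corollary as an immediate specialization of Proposition \ref{profext}. I will apply that proposition to the map $g := f\tow_x : \num \to X$, defined by $g(n) = f^n(x)$. Since $X$ is a metric space and $g$ is a set-theoretic map from $\num$ to $X$, the hypotheses of Proposition \ref{profext} are satisfied for $g$ regardless of any regularity of $f$ itself, so the proposition applies to $g$ and characterizes exactly when $g$ extends continuously to $\nhat$.

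Under the identification $g = f\tow_x$, condition (i) of the corollary is literally condition (i) of Proposition \ref{profext} applied to $g$, because an extension $\widehat{g} : \nhat \to X$ is the same datum as an extension $\widehat{f\tow_x}$. Likewise, because $g(n_i) = f^{n_i}(x)$, the statement that $\lim_{i \to \infty} f^{n_i}(x)$ exists and depends only on $s$ is exactly condition (ii) of Proposition \ref{profext} for $g$. Hence the equivalence (i) $\Leftrightarrow$ (ii) follows directly by invoking the proposition for $g$.

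I do not expect a genuine obstacle: the completeness of $X$ and the continuity of $f : X \to X$ play no role in the equivalence itself and are not used in the argument. They are recorded in the statement only because they are the natural ambient hypotheses under which the dynamics $n \mapsto f^n(x)$ and convergence of such iterates are to be discussed in the sequel, and because the existence assertion in (ii) becomes most meaningful when $X$ is complete.
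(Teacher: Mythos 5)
Your proposal is correct and is exactly the intended argument: the paper gives no separate proof for this corollary, treating it as the immediate specialization of Proposition~\ref{profext} to the map $f\tow_x$, which is precisely what you do. Your remark that completeness of $X$ and continuity of $f$ are not needed for the equivalence itself is also accurate; they are ambient hypotheses used later (e.g.\ in defining profinite preperiodic points and $f^s(x)$).
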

\begin{deff}
Let $X$ be a complete metric space and $f:X \to X$ be a continuous map.
A point $x$ in $X$ is said to be a {\em profinite preperiodic point} of $f$ if $x$ suffices the conditions (i) or (ii) of Corollary \ref{profper}.
A continuous map $f$ is said to be {\em profinite preperiodic} if every point $x$ in $X$ is a profinite preperiodic point of $f$.
\end{deff}
 We chose the term "profinite preperiodic" since $x$ is a preperiodic point of $f$ if and only if $f \tow_x$ factors through some finite semigroup $\zmn$.

 For a profinite preperiodic point $x$, we define $f^s(x)$ for $s \in \zhat$ by $f^s(x) := \lim_{i\to \infty} f^{n_i} (x)$, where the integer sequence $n_i$ is taken as in Proposition \ref{profper}(ii). If $f$ is profinite preperiodic, then for every $a,b \in \nhat$, we have $f^a\circ f^b = f^{a+b}$ and $(f^a)^b = f^{ab}$.

%%%%%%%%%%
\if0
\begin{rem} (For readers familiar with Stone-\v{C}ech compactification and Ellis semigroup (emveloping semigroup)) Since $\nhat$ is a compactification of discrete $\num$, there exists the canonical projection $\pi : \beta \num \to \nhat$. If a metric space $X$ is compact, $x \in X$ is a profinite periodic point if and only if $\widetilde{f \tow_x}: \beta \num \to X$, the extention of $f \tow_x : \num \to X$, splits as $\beta \num \xrightarrow{\pi} \nhat \to X$. By using the ultrafilter construction of $\beta \num$ and the decomposition $\beta \num = \num \sqcup \num^*$, the projection $\pi$ can be denoted as $\pi|_\num = \id_\num$ and $\pi|_{\num^*} (p) = (n\num + k \in p)_n \in \zhat$. 
\end{rem}
\fi
%%%%%%%%%%
\section{Dynamical System on $\zhat$}
Before discussing about dynamical systems on $\zhat$, we begin with reviewing basic facts on finite dynamical systems. Let $\alpha:\num \to \num$ be a function of ``the largest factor prime power'', that is, $\alpha(n) = \max \{p^a \mid  p {\ \rm is \ prime \ and\ } p^a \ {\rm divides}\  n\}$. By prime factorization, we can see $\alpha( \lcm \{ n_i \} ) = \max \{ \alpha(n_i) \} $ and $\alpha(mn)\leq \alpha(m)\alpha(n)$.
\begin{lemm}\label{finitemap}
 Let $n$ be a positive integer, $F$ a finite set of cardinality $n$ and $\sigma:F\to F$ a map.
\begin{enumerate}
\item For every $a\in F$, there exist nonnegative integers $k$ and $l$ such that $l\geq1$, $\sigma^{k}(a)=\sigma^{k+l}(a)$ and $k+l \leq n$.
\item For every $a\in F$, if $\sigma^{k_i}(a) = \sigma^{k_i+l_i}(a)$ for $i = 1,2$, then $\sigma^k (a) = \sigma^{k+l}(a)$  holds for $(k,l) = (\min \{k_1, k_2 \}, \gcd \{ l_1, l_2 \})$.
\item For every $a\in F$, there exists an unique pair of nonnegative integers $(k,l)$ such that $\sigma^{k'} (a) = \sigma^{k'+l'}(a)$ if and only if $k \leq k'$ and $l \mid l'$. We denote this pair by $(k_a,l_a)$.
\item Let $K=\max_{a\in F} \{k_a\}$ and $L=\lcm_{a\in F}\{l_a\}$. Then $\sigma^{K'} = \sigma^{K'+L'}$ if and only if $K \leq K'$ and $L \mid L'$.
\item We have $K + \alpha(L) \leq n$.
\end{enumerate}
\end{lemm}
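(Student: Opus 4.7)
The plan is to treat parts (i)--(iv) as pigeonhole/B\'ezout bookkeeping and to concentrate the real work on (v), which I would resolve by passing to the functional graph of $\sigma$.

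For (i), apply the pigeonhole principle to the orbit $a, \sigma(a), \ldots, \sigma^n(a)$: among $n+1$ elements of $F$ two must coincide, giving a pair $(k,l)$ with $k+l \leq n$. For (ii), assume without loss of generality $k_1 \leq k_2$. Each hypothesis gives $\sigma^m(a) = \sigma^{m+l_i}(a)$ for all $m \geq k_i$, so for $m \geq k_2$ both $l_1$ and $l_2$ are periods of the tail sequence. Writing $d := \gcd(l_1,l_2) = u l_1 - v l_2$ with $u,v \geq 0$ via B\'ezout, I get $\sigma^m(a) = \sigma^{m+u l_1}(a) = \sigma^{m+d}(a)$ once I check all intermediate indices stay $\geq k_2$. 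To push the identity down to $m = k_1$, choose $t$ large enough that $k_1 + t l_1 \geq k_2$, apply the $d$-shift at that height, and descend back using the $l_1$-period together with $d \mid l_1$.

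Parts (iii) and (iv) then follow formally. Define $(k_a, l_a)$ as the coordinatewise minimum of $P_a := \{(k,l) : \sigma^k(a) = \sigma^{k+l}(a)\}$, which is well defined by iterating (ii); the characterization of arbitrary $(k', l') \in P_a$ comes by applying (ii) once more to $(k_a, l_a)$ and $(k', l')$, together with the observation that for $k' \geq k_a$ and $l_a \mid l'$ one can iterate the defining identity forward. Part (iv) is the ``for all $a$'' version: the identity $\sigma^{K'} = \sigma^{K'+L'}$ holds precisely when it holds at every $a$, which by (iii) is equivalent to $k_a \leq K'$ and $l_a \mid L'$ for every $a$, i.e.\ to $K \leq K'$ and $L \mid L'$.

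The substance lies in (v). I would interpret $\sigma$ through its functional graph on $F$: each weakly connected component $C$ has a unique directed cycle of length $c_C$ with in-trees attached, and the orbit of each $a \in C$ enters the cycle after exactly $k_a$ steps with $l_a = c_C$ (the equality, rather than a proper divisor relation, holds because the minimal period of a point on a directed cycle equals the cycle length). Consequently $|C| \geq k_a + c_C$ for every $a \in C$. Using $\alpha(\lcm_C c_C) = \max_C \alpha(c_C)$, pick $a_0$ realizing $k_{a_0} = K$ in a component $C_0$, and let $C^*$ realize $\alpha(L) = \max_C \alpha(c_C)$. If $C_0 = C^*$, then $n \geq |C_0| \geq K + c_{C_0} \geq K + \alpha(L)$. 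Otherwise $C_0$ and $C^*$ are disjoint, so $n \geq |C_0| + |C^*| \geq (K + c_{C_0}) + c_{C^*} \geq K + 1 + \alpha(L)$. In either case $K + \alpha(L) \leq n$. The only delicate point is the functional-graph identification of $l_a$ with $c_C$; once that is clean, the case analysis is straightforward.
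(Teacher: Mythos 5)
Your proposal is correct and follows essentially the same route as the paper: pigeonhole for (i), a B\'ezout descent for (ii), the min/gcd construction for (iii)--(iv), and for (v) the combination of $\alpha(\lcm_a l_a)=\max_a\alpha(l_a)$ with the disjointness of a maximal tail and a maximal cycle. The only cosmetic difference is in (v), where the paper avoids your component-by-component case split by observing directly that the tail set $\{b,\sigma(b),\dots,\sigma^{k_b-1}(b)\}$ consists of non-periodic points while the cycle through $\sigma^{k_c}(c)$ consists of periodic points, so the two sets are disjoint regardless of whether $b$ and $c$ lie in the same component.
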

\begin{proof}
(i) Obvious from the pigeonhole principle.

(ii) Without loss of generality, we assume $k = k_1$. We can take a pair of integers $(x,y)$ as $xl_1 + yl_2 = l$. Then we have 
$$ \sigma^{k' +l}(a) = \sigma^{k' +xl_1 + yl_2}(a) = \sigma^{k'}(a)\ (k' \gg 0), $$
particularly for $k' = k+Nl_1$,
$$\sigma^{k + Nl_1}(a) = \sigma^{k+Nl_1 + l}(a) = \sigma^l(\sigma^{k+Nl_1}(a))\ (N \gg 0).$$
 The assumption $k = k_1$ implies $\sigma^{k+Nl_1}(a) = \sigma^k(a)$ for every $N$. Thus $\sigma^k(a)=\sigma^l(f^k(a))=\sigma^{k+l}(a)$.

(iii) Let $k :=\min \{ k' \mid$  $\sigma^{k'}(a) =\sigma^{k'+l'}(a)$ for some $l' \}$ and $l :=\gcd \{ l' \mid$  $\sigma^{k'}(a) =\sigma^{k'+l'}(a)$ for some $k' \}$. By (i) and (ii), the pair $(k,l)$ satisfies the condition.

(iv) Obvious from (iii).

(v) We have
$$K + \alpha(L) = \max \  k_a + \alpha(\lcm \ l_a) = \max k_a + \max \alpha(l_a) \leq \max k_a + \max l_a.$$
 Let $b,c \in F$ be such that $\max k_a = k_b$ and $\max l_a = l_c$. Let 
\begin{eqnarray}
T_b &:= &\{b,\sigma(b),...,\sigma^{k_b-1}(b)\} \nonumber \\
& = &\{\sigma^k(b) \mid  k \in \num {\rm \ such \ that\ } \sigma^k(b) \neq \sigma^{k+l}(b) {\rm \ for\ every\ }l \in \num \}, \nonumber \\
C_c & := &\{\sigma^{k_c}(c),\sigma^{k_c+1}(c),...,\sigma^{k_c+l_c-1}(c)\} \nonumber \\
& = &\{\sigma^k(c) \mid  k \in \num {\rm \ such \ that\ } \sigma^k(c) = \sigma^{k+l_c}(c)\}. \nonumber
\end{eqnarray}
Then $T_b$ and $C_c$ are disjoint subsets of $F$, which leads to $k_b + l_c \leq n$.
\end{proof}

Integers $k=k_a$ and $l=l_a$ are called the {\em tail length} and the {\em cycle length} of $\sigma$ on $a$ respectively. Also, $K$ and $L$ are called the {\em preperiod length} and the {\em period} of $\sigma$ respectively.

%%%%%%%%%%%%%%
\if0
\begin{lemm}
 Let $n$ be a positive integer, $\sigma:S \to S$, $\ol{\sigma}:[n] \to [n]$ and $\pi:S \to [n]$ be maps such that $\pi \circ \sigma = \ol{\sigma} \circ \pi$. For every $a \in [m]$, we denote $\pi(a)$ by $\ol{a}$. Then there exist positive integers $\od$ and $\p$ such that
\begin{eqnarray}
 k_{\sigma,a} & \leq & k_{\ol{\sigma},\ol{a}} + \od l_{\ol{\sigma},\ol{a}}, \nonumber \\
 l_{\sigma,a} & = & \p l_{\ol{\sigma},\ol{a}}, \ \ \nonumber \\
 \od + \p & \leq & \# \pi^{-1}(\ol{\sigma}^{k_{\ol{\sigma},\ol{a}}}(\ol{a})). \nonumber
\end{eqnarray}
\end{lemm}
\begin{proof}
 $\pi^{-1}(\ol{\sigma}^{k_{\ol{\sigma},\ol{a}}}(\ol{a})) $
\end{proof}
\begin{exam}
 $f(x) = x^2+x+3$ gives the map on $\zring_3$ such that $f(\ol{0})=0, f(\ol{1})=2$ and $f(\ol{2})=0$. /abort
\end{exam}
\fi
%%%%%%%%%%%%%%

\begin{deff}\label{periodmap}Let $f: \zhat \to \zhat$ be a continuous map. A map $P:\num \to \num$ is said to be a \emph{period map} of $f$ that satisfies following equivalent conditions:
\begin{enumerate}
\item For every $s,t \in \zhat$ and $n \in \num$, if $s \equiv_{P(n)} t$ then $f(s) \equiv_n f(t)$.
\item For every $n \in \num$, there exists a map $f_n : \zring_{P(n)} \to \zn$ that makes the following diagram commutative:
\begin{equation}
\begin{CD}
\zhat @>f  >> \zhat \\
@VVV @VVV\\
\zring_{P(n)} @>f_n >> \zn.
\end{CD} \nonumber
\end{equation}
\end{enumerate}
Here the vertical arrows are projections.
\end{deff}
Maps $f_n$ are called {\em reductions} of $f$.

Note that $\zhat$ is metrizable with some metric which is invariant by translations of the form $s \mapsto s + c$. Since $\zhat$ is compact, any continuous map $\zhat \to \zhat$ is also uniformly continuous with such a metric, hence there exists a period map of the continuous map.
 A period map of $f$ is not uniquely determined by $f$. 
\begin{prop}\label{compper}
Let $P$ be a period map of $f$ and $Q$ a period map of $g$. Then $Q \circ P$ is a period map of $f \circ g$. In particular, $P^n$ is a period map of $f^n$.
\end{prop}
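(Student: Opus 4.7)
The plan is to use condition (i) of Definition \ref{periodmap} directly and chain the two congruences. Fix $n \in \num$ and $s, t \in \zhat$, and suppose $s \equiv_{Q(P(n))} t$. Since $Q$ is a period map of $g$, applying condition (i) with modulus $P(n)$ yields $g(s) \equiv_{P(n)} g(t)$. Now since $P$ is a period map of $f$, applying condition (i) again gives $f(g(s)) \equiv_n f(g(t))$, i.e., $(f \circ g)(s) \equiv_n (f \circ g)(t)$. This verifies condition (i) for the map $Q \circ P$ relative to $f \circ g$, so $Q \circ P$ is a period map of $f \circ g$.

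For the second assertion, I would argue by induction on $n$. The base case $n = 1$ is trivial, as $P$ itself is a period map of $f$. For the inductive step, assuming $P^n$ is a period map of $f^n$, I apply the first part of the proposition with $g := f^n$ and $Q := P^n$: the composition $P^n \circ P = P^{n+1}$ is then a period map of $f \circ f^n = f^{n+1}$.

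There is no real obstacle here; the statement is essentially a restatement of the chain rule for moduli, and both parts follow from a single application of the defining property. The only thing to be mindful of is that the order of composition reverses when moving from maps to period maps (as one would expect, since period maps travel ``backwards'' from the target modulus to the source modulus), so one must be careful to write $Q \circ P$ rather than $P \circ Q$ when pairing with $f \circ g$.
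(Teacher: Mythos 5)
Your proof is correct and follows exactly the paper's argument: verify condition (i) of Definition \ref{periodmap} by chaining the two congruences, first using that $Q$ is a period map of $g$ at modulus $P(n)$ and then that $P$ is a period map of $f$ at modulus $n$. The explicit induction for $P^n$ and the remark on the reversal of composition order are fine additions but not substantively different from what the paper does.
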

\begin{proof}
We check (i) of the definition above. If we take $s,t \in \zhat$ such that $s\equiv_{Q (P(n))} t$, then it follows $g(s) \equiv_{P(n)} g(t)$ and $f(g (s)) \equiv_n f(g(t))$.
\end{proof}

\begin{deff}\label{congpro} Let $f:\zhat \to \zhat$ be a continuous map.
\begin{enumerate}
\item $f$ is said to be $congruence$ $stable$ if there exists a period map $P$ of $f$ such that $P^k(n)=P^{k+1}(n) (k \gg 0)$ for each positive integer $n$.
\item $f$ is said to be $congruence$ $preserving$ if ${\rm id}_\num$, the identity function on $\num$, is a period map of $f$.
\end{enumerate}
\end{deff}
\begin{rem}
Every polynomial in $\zhat[x]$, as a map $\zhat \to \zhat$, is congruence preserving, but not every congruence preserving map is a polynomial with $\zhat$ coefficients, see \cite{Ch} and \cite{Ce-Gr-Gu}.
\end{rem}
\begin{lemm}\label{metaperiod}
Let $f:\zhat \to \zhat$ be a continuous map, $P$ a period map of $f$ and $m=P^k(n)=P^{k+1}(n)$ for $k,m,n \in \num$.
Let $K$ be the preperiod length of the reduction $f_m : \zm \to \zm$ and $L$ be the period of $f_m$.

For every $s,t \in \zhat$ and $u,v \in \num$, if $s \equiv_m t$, $u,v \geq k+K$ and $u \equiv_{L} v$, then we have $f \tow_s(u) \equiv_n f \tow_t(v)$.
\end{lemm}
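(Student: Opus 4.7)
The starting point is the observation that $m = P^k(n) = P^{k+1}(n) = P(P^k(n)) = P(m)$, so $P$ fixes $m$ and hence $P^j(m) = m$ for every $j \geq 0$. This makes the reduction $f_m \colon \zm \to \zm$ an honest self-map, and by iterating the commutative square of Definition \ref{periodmap}(ii) (or equivalently applying Proposition \ref{compper} at the point $m$), the $m$-reduction of $f^j$ is precisely $(f_m)^j$ for every $j \geq 0$.

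The next step would be to combine this with the reduction $f^k_n \colon \zring_{P^k(n)} = \zm \to \zn$ of $f^k$. For any $u \geq k$ we have $P^u(n) = P^{u-k}(m) = m$, so the $n$-reduction of $f^u$ is a map $\zm \to \zn$; decomposing $f^u = f^k \circ f^{u-k}$ and chasing the squares from Definition \ref{periodmap}(ii) yields the factorization
\[
f^u_n \;=\; f^k_n \circ (f_m)^{u-k} \colon \zm \longrightarrow \zm \longrightarrow \zn,
\]
valid for every $u \geq k$.

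Granted this factorization, the lemma splits cleanly into two congruences. The hypothesis $s \equiv_m t$ says that $s$ and $t$ have the same image in $\zm$, so $f^u_n$ sends them to the same element of $\zn$, giving $f^u(s) \equiv_n f^u(t)$. Separately, $u - k$ and $v - k$ are both at least $K$ and differ by a multiple of $L$, so Lemma \ref{finitemap}(iv) applied to the finite dynamical system $f_m \colon \zm \to \zm$ yields $(f_m)^{u-k} = (f_m)^{v-k}$ as self-maps of $\zm$; feeding $\overline{t}$ through the factorization then gives $f^u(t) \equiv_n f^v(t)$. Chaining the two congruences produces $f^u(s) \equiv_n f^v(t)$, which is the desired conclusion $f \tow_s(u) \equiv_n f \tow_t(v)$.

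The only step that is not bookkeeping is the compatibility $(f^j)_m = (f_m)^j$, i.e.\ that iterating reductions agrees with reducing iterates at the fixed level $m$; this is a routine induction on $j$ using Definition \ref{periodmap}(ii) and the fact $P(m) = m$, but it is the structural input that makes the whole argument work. Once it is in place, the lemma is just a translation of the preperiod/period data of the finite map $f_m$ back up to $\zhat$, and I expect no further obstacle.
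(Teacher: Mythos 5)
Your proposal is correct and follows essentially the same route as the paper: both exploit $P(m)=m$ to treat $f_m$ as a finite dynamical system on $\zm$, apply Lemma \ref{finitemap}(iv) to equate $(f_m)^{u-k}$ and $(f_m)^{v-k}$, and then push the resulting mod-$m$ congruence down to a mod-$n$ congruence through the reduction of $f^k$ (the paper invokes Proposition \ref{compper} where you phrase it as the factorization $f^u_n = f^k_n \circ (f_m)^{u-k}$). The only difference is the order in which you swap $s$ for $t$ and $u$ for $v$, which is immaterial.
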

\begin{proof}If we have $u-k , v-k \geq K$ and $u-k \equiv_{L} v-k$, then 
$$ f^{u-k}(s) \equiv_m f^{v-k}(s) \equiv_m f^{v-k}(t).$$
 By Proposition \ref{compper}, $P^k$ is a period map of $f^k$, we obtain 
$$ f^k(f^{u-k}(s))\equiv_n f^k(f^{v-k}(t)),$$
that is $f \tow_s (u) \equiv_n f \tow_t(v)$.
\end{proof}
\begin{thm}\label{cppp}
Let $f:\zhat \to \zhat$ be a congruence stable map. Then $f$ is profinite preperiodic.
\end{thm}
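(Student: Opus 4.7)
The plan is to verify condition (ii) of Corollary \ref{profper} at every point $x \in \zhat$. Fix $x, s \in \zhat$ and a sequence of positive integers $\{n_i\}$ with $n_i \to \infty$ in $\real$ and $\widehat{n_i} \to s$ in $\zhat$. Since $\zhat = \varprojlim_n \zn$ is complete and metrizable, it is enough to show that for each fixed $n \in \num$, the residue $f^{n_i}(x) \bmod n$ is eventually constant, with the eventual value determined only by $x$, $n$, and $s$. Doing this for every $n$ will produce a limit in $\zhat$ depending only on $s$ (and $x$), which is exactly what Corollary \ref{profper}(ii) requires.

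To get such a stabilization at a fixed $n$, I would invoke the congruence-stability hypothesis: choose $k$ such that $m := P^k(n) = P^{k+1}(n)$. The crucial observation is that $m = P(m)$, so the reduction $f_m$ is an honest self-map $\zm \to \zm$; let $K$ and $L$ be its preperiod length and period, in the sense of Lemma \ref{finitemap}(iv). Applying Lemma \ref{metaperiod} with $s = t = x$ then yields the key dichotomy: whenever $u, v \geq k + K$ and $u \equiv_L v$, one has $f^u(x) \equiv_n f^v(x)$.

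It remains to feed the sequence $\{n_i\}$ into this. Because $n_i \to \infty$ in $\real$, the lower bound $n_i \geq k + K$ holds for all sufficiently large $i$. Because $\widehat{n_i} \to s$ in $\zhat$, the residues $n_i \bmod L$ eventually stabilize to $s \bmod L$. Combining both, for $i \gg 0$ the residue $f^{n_i}(x) \bmod n$ is constant, and its value is determined by $x$, $n$, and $s \bmod L$, hence by $x$, $n$, and $s$ alone. This gives the desired convergence in $\zhat$ and the independence of the limit from the particular approximating sequence $\{n_i\}$, so $x$ is a profinite preperiodic point; since $x$ was arbitrary, $f$ is profinite preperiodic.

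I do not expect any serious obstacle: the argument is essentially a clean application of Lemma \ref{metaperiod}, and congruence stability is precisely the hypothesis that lets us replace the $\zhat$-scale iteration problem by eventual periodicity of a single finite self-map $f_m$ on $\zm$. The only point demanding a little care is verifying that $m$ is a fixed point of $P$ so that $f_m$ really lives on $\zm \to \zm$ (rather than $\zring_{P(m)} \to \zm$), which is immediate from the defining equation $P^k(n) = P^{k+1}(n)$.
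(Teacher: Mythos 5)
Your proof is correct and follows essentially the same route as the paper: both reduce to checking condition (ii) of Corollary \ref{profper}, pick $k$ with $m=P^k(n)=P^{k+1}(n)$, and apply Lemma \ref{metaperiod} with the preperiod $K$ and period $L$ of $f_m$ to the tail of $\{n_i\}$. Your observation that $P(m)=m$ makes $f_m$ a genuine self-map, and your direct remark that the stabilized residue depends only on $s \bmod L$, are slightly cleaner packagings of the same argument (the paper instead compares two approximating sequences to get independence).
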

\begin{proof}
We check the condition (ii) in Proposition \ref{profper}. Let $\{n_i \}$ be a sequence of positive integers such that $n_i  \to \infty$ in $\mathbb{R}$ and $\widehat{n_i} \to s$ in $\zhat$ as $i \to \infty$ for some $s \in \zhat$.

 First, we will show the sequence $\{ f^{n_i}(s) = f \tow_s (n_i) \}_i$ converges, that is, becomes eventually stable modulo $n$ for an arbitrary positive integer $n$. Since $f$ is congruence stable, there is a period map $P$ of $f$ such that $P^k(n) = P^{k+1}(n)$ for some $k$. Therefore we take $K$ and $L$ as in Lemma \ref{metaperiod}. By the definition of the sequence $\{n_i\}$, we have
$$n_i,n_j \geq k+K{\rm\ and\ }n_i \equiv_{L} n_j \ (i,j \gg 0).$$
This leads to 
$$f \tow_x(n_i) \equiv_n f \tow_x(n_j)\ (i,j \gg 0)$$
by the above lemma. Thus $\{ f \tow_x (n_i) \mod n \}_i$ become eventually stable.

 Next, let $\{n'_i\}$ be another sequence that satisfies the same condition as $\{n_i\}$ and converges to the same limit $s$ as $\{n_i\}$ does. Then by a similar argument, we have
$$\lim_{i \to \infty} f\tow_s(n_i) \equiv_n \lim_{i \to \infty} f\tow_s(n'_i)$$
 for every positive integer $n$. This means $\lim_{i \to \infty} f\tow_x(n_i) = \lim_{i \to \infty} f\tow_x(n'_i)$ in $\zhat$.
\end{proof}
 By Proposition \ref{profper}, we obtain a continuous map $\widehat{f \tow_s}: \nhat \to \zhat$. By Proposition \ref{constnhat}, we regard the domain $\nhat$ as $\num \cup \zhat$ and denote the restriction $\widehat{f \tow_s} |_{\zhat} : \zhat \to \zhat$ by $f\ttow_s$, which is also continuous.
\begin{prop} \label{towper}
 Let $f: \zhat \to \zhat$ be congruence stable, $P$ a period map of $f$ and $\mu : \num \to \num$ a map, such that for every $n \in \num$ it holds $\mu(n)=P^k(n)=P^{k+1}(n)$ for some $k \in \num$. Let $\p:\num \to \num$ be a map such that
$$\p(n) = \text{ the period of the reduction } f_{\mu(n)} : \zring_{\mu(n)} \to \zring_{\mu(n)}.$$
Then $\p$ is a period map of $f \ttow_x$ for every $x$ in $\zhat$.

 Moreover, for any $x,y,s \in \zhat$, if $x \equiv_{\mu(n)} y$ then $f\ttow_x(s) \equiv_n f\ttow_y(s)$.
\end{prop}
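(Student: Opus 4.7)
My plan is to reduce both assertions to Lemma \ref{metaperiod} by approximating elements of $\zhat$ with sequences of positive integers. By Proposition \ref{profext} (combined with Theorem \ref{cppp}), the value $f\ttow_x(s)$ is computed as $\lim_{i\to\infty} f^{n_i}(x)$ for any integer sequence $\{n_i\}$ with $n_i\to\infty$ in $\mathbb{R}$ and $\widehat{n_i}\to s$ in $\zhat$. Since each $n\zhat$ is closed in $\zhat$, congruences modulo $n$ are preserved by such limits, so it will suffice to produce the desired congruences at the integer-exponent level via Lemma \ref{metaperiod}.

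To show that $\p$ is a period map of $f\ttow_x$, I would fix $n\in\num$ and $s,t\in\zhat$ with $s\equiv_{\p(n)}t$, set $m=\mu(n)$ and pick $k\in\num$ with $\mu(n)=P^k(n)=P^{k+1}(n)$, and let $K$ be the preperiod length of the reduction $f_m$. Choose integer sequences $n_i\to s$ and $n_i'\to t$ diverging in $\mathbb{R}$. Continuity of the projection $\zhat\to\zring_{\p(n)}$ forces $n_i\equiv_{\p(n)}n_i'$ for $i\gg0$, and divergence forces $n_i,n_i'\geq k+K$ eventually. Lemma \ref{metaperiod} applied with the common base point $x$ and exponents $n_i,n_i'$ then yields $f^{n_i}(x)\equiv_n f^{n_i'}(x)$ for $i\gg0$; taking limits in $\zhat$ gives $f\ttow_x(s)\equiv_n f\ttow_x(t)$, which is condition (i) of Definition \ref{periodmap}.

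For the second assertion, I would fix $n\in\num$ and $x,y,s\in\zhat$ with $x\equiv_{\mu(n)}y$ and choose a single sequence $\{n_i\}\subset\num$ with $n_i\to\infty$ in $\mathbb{R}$ and $\widehat{n_i}\to s$, so that $n_i\geq k+K$ eventually. Now apply Lemma \ref{metaperiod} with base points $x,y$ and the identical exponent $u=v=n_i$ (for which $u\equiv_{\p(n)}v$ holds trivially) to obtain $f^{n_i}(x)\equiv_n f^{n_i}(y)$ for $i\gg0$. Passing to the limit on both sides yields $f\ttow_x(s)\equiv_n f\ttow_y(s)$.

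The only real work is bookkeeping: verifying that the parameters $k$, $K$, and $\p(n)$ extracted from $\mu$, $P$, and the reduction $f_{\mu(n)}$ match the precise hypotheses of Lemma \ref{metaperiod}. No deeper obstacle is expected, because the closure of $n\zhat$ makes the transition from integer-exponent congruences to congruences of $f\ttow_x$ automatic.
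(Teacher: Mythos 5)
Your proposal is correct and follows essentially the same route as the paper: both parts are reduced to Lemma \ref{metaperiod} (with $m=\mu(n)$, $L=\p(n)$) applied to diverging integer approximations of the exponents, and the congruences survive the passage to the limit because $n\zhat$ is closed. The only cosmetic difference is that for the ``moreover'' part the paper re-derives the needed congruence directly from Proposition \ref{compper} rather than citing Lemma \ref{metaperiod} with $u=v$, but the content is identical.
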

\begin{proof} We proceed arguement of the proof of Theorem \ref{cppp}. We take $s,t \in \zhat$ as $t \equiv_{L} s$. For any sequence $\{n''_i\}$ such that $n''_i \to t$, we also have $f \tow_x(n_i) \equiv_n f\tow_x (n''_j) (i,j \gg 0)$ by a similar argument to Theorem \ref{cppp}. Therefore we have $f \ttow_x(t) \equiv_n f\ttow_x(s)$.

We shall show the latter part. Let $n$ be a positive integer, $n_i$ a sequence of positive integers such that $n_i \to s$ in $\nhat$. By the assumption $P(\mu(n))=\mu(n)$,  if $n_i \geq k$ then we have
$$ f^{n_i-k}(x) \equiv_{\mu(n)} f^{n_i-k}(y)$$
then by the assumption $P^k(n)=\mu(n)$ and Proposition \ref{compper}, we have
$$f \tow_x (n_i) = f^{n_i}(x) \equiv_n f^{n_i}(y) = f\tow_y(n_i) \ (i \gg 0).$$
Passing to the limits, we obtain $f \ttow_x (s) \equiv_n f\ttow_y(s)$.
\end{proof}
 We now discuss the case $f:\zhat \to \zhat$ is congruence preserving. We denote the period of the reduction $f_n :\zn \to \zn$ by $\p_f(n)$, defining $\p_f:\num \to \num$. By the above proposition, we have the following:
\begin{prop}\label{towcongper}
Let $f: \zhat \to \zhat$ be congruence preserving and $s \in \zhat$. Then $\p_f$ is a period map of $f \ttow_s$.
\end{prop}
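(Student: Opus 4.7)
The plan is to derive the statement as an immediate specialization of Proposition \ref{towper} with $P = \mu = \id_\num$. Since $f$ is congruence preserving, by definition $\id_\num$ is a period map of $f$, so I set $P := \id_\num$. Observing that $P^k(n) = n$ for every $k, n \in \num$, the equation $P^k(n) = P^{k+1}(n) = n$ holds trivially (say, for $k=1$). This simultaneously verifies that $f$ is congruence stable, so that Proposition \ref{towper} applies, and justifies choosing $\mu := \id_\num$ in that proposition.

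With this $\mu$, the map $\p$ appearing in Proposition \ref{towper} becomes $\p(n) = $ period of $f_{\mu(n)} = f_n : \zn \to \zn$, which is exactly $\p_f(n)$ by the definition given just before the statement. Thus Proposition \ref{towper} directly yields that $\p_f$ is a period map of $f \ttow_s$ for every $s \in \zhat$, which is the desired conclusion.

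Since the result is essentially an unwinding of definitions on top of a previously established proposition, no real obstacle arises; the only point to verify is that $\id_\num$ is a legitimate choice for both $P$ and $\mu$, which follows immediately from the hypothesis that $f$ is congruence preserving.
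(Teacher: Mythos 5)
Your proof is correct and matches the paper exactly: the paper also obtains Proposition \ref{towcongper} as an immediate application of Proposition \ref{towper} with $P=\mu=\id_\num$, noting that congruence preserving implies congruence stable with $\mu(n)=n$ and hence $\p(n)=\p_f(n)$. No issues.
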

Now we shall evaluate $\p_f$.
\begin{lemm}\label{p_f<1}
 Let $f:\zhat \to \zhat$ be congruence preserving and $p$ a prime number. The following conditions are equivalent each other:
\begin{enumerate}
\item $\p_f(p) \neq p$.
\item The reduction $f_p:\zp \to \zp$ of $f$ is not a cyclic permutation of length $p$.
\item For every $x \in \zp$, it holds $l_x < p$ for the cycle length $l_x$ of $f_p$ on $x$.
\item $\alpha(\p_f(p)) < p$.
\end{enumerate}
\end{lemm}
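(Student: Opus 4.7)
The plan is to apply Lemma \ref{finitemap} to the reduction $\sigma = f_p$ on the $p$-element set $F = \zp$. Writing $l_x$ for the cycle length at $x$, Lemma \ref{finitemap}(iv) identifies $\p_f(p) = L := \lcm_{x \in \zp} l_x$, and each $l_x \leq p$ by Lemma \ref{finitemap}(i). I would then establish the equivalences in the chain (ii) $\Leftrightarrow$ (iii) $\Leftrightarrow$ (iv) $\Leftrightarrow$ (i).

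For (ii) $\Leftrightarrow$ (iii), the key observation is that a cycle of $f_p$ of length $p$ must exhaust the $p$-element set $\zp$, forcing $f_p$ to be a single $p$-cycle with no preperiod; so (iii) fails iff some $l_x = p$ iff $f_p$ is a cyclic permutation of length $p$. For (iii) $\Leftrightarrow$ (iv), I would use the identity $\alpha(\lcm_i a_i) = \max_i \alpha(a_i)$, which is immediate from prime factorization, to write $\alpha(L) = \max_x \alpha(l_x)$; since $\alpha(m) \leq m$ and $l_x \leq p$, we have $\alpha(l_x) = p$ exactly when $l_x = p$, so $\alpha(L) < p$ iff all $l_x < p$.

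For (i) $\Leftrightarrow$ (iv), the forward direction is trivial since $\alpha(p) = p$; conversely, $\alpha(L) = p$ forces $p \mid L = \lcm_x l_x$, hence $l_x = p$ for some $x$, whence (by the cycle-exhaustion argument above) $L = p$. The only substantive geometric fact in the whole proof is that a $p$-cycle in a $p$-element set fills the set, which powers both (ii) $\Leftrightarrow$ (iii) and the nontrivial direction of (i) $\Leftrightarrow$ (iv); beyond this there is no real obstacle, just bookkeeping with $\lcm$ and $\alpha$.
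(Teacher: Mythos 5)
Your proposal is correct and rests on the same two facts as the paper's proof (a $p$-cycle exhausts the $p$-element set $\zp$, and $\alpha(\lcm) = \max\alpha$ combined with $l_x \leq p$). The only organizational difference is that the paper closes a cyclic chain (i)$\Rightarrow$(ii)$\Rightarrow$(iii)$\Rightarrow$(iv)$\Rightarrow$(i) where (i)$\Rightarrow$(ii) and (iv)$\Rightarrow$(i) are single-line contrapositions, whereas you prove (i)$\Leftrightarrow$(iv) directly by invoking the cycle-exhaustion argument a second time — a harmless redundancy, not a different method.
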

\begin{proof}
((i) $\Rightarrow$ (ii)) The contraposition is obvious.

((ii)  $\Rightarrow$ (iii)) We show the contraposition. Assume that there is some $x \in \zp$ such that $f_p^k(x) = {f_p}^{k+l}(x)$ implies $l \geq p$. Then $x,f_p(x),...,(f_p)^{p-1}(x)$ are distinct elements of $\zp$ and $(f_p)^p(x) =x$, that is, $f_p$ is a cyclic permutation of length $p$.

((iii)  $\Rightarrow$ (iv)) Assume (iii). By Lemma \ref{finitemap} (iv), the period $\p_f(p)$ of $f_p:\zp \to \zp$ satisfies
$$\alpha (\p_f(p)) = \alpha( \lcm\{ l_x\} ) = \max \{ \alpha(l_x) \} \leq  \max \{l_x\} < p,$$
where $x$ runs over $\zp$.

((iv) $\Rightarrow$ (i)) The contraposition is obvious, because $p$ is a prime number.
\end{proof}

\begin{deff}\label{ts}
A congruence preserving function $f: \zhat \to \zhat$ is said to be \emph{tower-stable} if $f$ satisfies $\alpha(\lambda_f(p))< p$, the condition (iv) of Lemma \ref{p_f<1}, for every prime number $p$. 
\end{deff}
\begin{lemm} \label{per}

Let $f$ be a congruence preserving map and $n,n'$ be positive integers. The followings hold:
\begin{enumerate}
\item $\lcm(\p_f(n),\p_f(n'))=\p_f(\lcm(n,n'))$. In particular, if $n  \mid  n'$ then $\p_f(n)  \mid  \p_f(n')$.
\item $\p_f(p^{a+1}) \mid  \lcm\{1,2,\ldots, p\}\cdot \p_f(p^a)$ for every prime $p$ and every positive integer $a$.
\item $\p_f^k(n) = \p_f^{k+1}(n) \ (k \gg 0)$.
\end{enumerate}
Moreover, if $f$ is tower-stable, the assertions (iii) can be strengthen as
\begin{itemize}
\item[ (iii')] $\p_f^k(n) = 1 \ (k \gg 0)$.
\end{itemize}
\end{lemm}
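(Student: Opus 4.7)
The plan is to prove each assertion in turn, exploiting the compatibility between the reductions $f_n$ and the projections $\zahl/n'\zahl \twoheadrightarrow \zahl/n\zahl$ that \emph{congruence preserving} encodes, together with the elementary cycle counting of Lemma \ref{finitemap}.

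For (i), I would first show that $n \mid n'$ implies $\lambda_f(n) \mid \lambda_f(n')$: the surjective projection $\zahl/n'\zahl \twoheadrightarrow \zahl/n\zahl$ intertwines $f_{n'}$ with $f_n$, so any equation $f_{n'}^{k+L} = f_{n'}^{k}$ descends to $f_n^{k+L} = f_n^{k}$. Applying this to $n, n' \mid \lcm(n, n')$ gives $\lcm(\lambda_f(n), \lambda_f(n')) \mid \lambda_f(\lcm(n,n'))$. For the reverse divisibility, I set $\lambda := \lcm(\lambda_f(n), \lambda_f(n'))$ and check, for $k \gg 0$, that $f_{\lcm(n,n')}^{k+\lambda}(x) \equiv f_{\lcm(n,n')}^{k}(x)$ holds modulo both $n$ and $n'$, hence modulo $\lcm(n,n')$.

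For (ii), pick a cycle of $f_{p^{a+1}}$ of length $l$ through some $y \in \zahl/p^{a+1}\zahl$, and let $l'$ be the cycle length of its reduction $\bar y \in \zahl/p^a\zahl$ under $f_{p^a}$, so $l' \mid l$; write $l = ml'$. Since $f_{p^{a+1}}^{l'}(y) \equiv y \pmod{p^a}$ and $f$ is congruence preserving, the iterate $f_{p^{a+1}}^{l'}$ restricts to a self-map of the $p$-element fiber $\{y + p^a c : c \in \zahl/p\zahl\}$, and the orbit of $y$ under this restricted map has length exactly $m$ (else $l$ would fail to be the true cycle length of $y$). An orbit in a set of $p$ elements has length at most $p$, so $m \mid \lcm\{1,\ldots,p\}$ and thus $l = ml' \mid \lcm\{1,\ldots,p\} \cdot \lambda_f(p^a)$; taking lcms over all cycles of $f_{p^{a+1}}$ gives (ii).

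For (iii) and (iii'), set $n_i := \lambda_f^i(n)$. By (i) and iterated (ii), $\lambda_f(m) = \lcm_p \lambda_f(p^{v_p(m)})$ and the primes of each $\lambda_f(p^a)$ are $\leq p$; hence the primes of every $n_i$ are bounded by $P$, the largest prime factor of $n_0$. I then proceed by descending induction on primes $p \leq P$ to show $v_p(n_i)$ is eventually constant. For $p = P$, only the $P$-block contributes: iterated (ii) together with $v_P(\lambda_f(P)) \leq 1$ (from $\alpha(\lambda_f(P)) \leq P$ in Lemma \ref{finitemap}(v)) gives $v_P(n_{i+1}) \leq v_P(n_i)$, and a non-increasing sequence of nonnegative integers stabilizes. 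For smaller $p$, once $v_q$ has stabilized at $v_q^\infty$ for every $q > p$, the recursion becomes $v_p(n_{i+1}) = \max(v_p(\lambda_f(p^{v_p(n_i)})),\, C)$ with $C := \max_{q > p} v_p(\lambda_f(q^{v_q^\infty}))$ constant. Since $v_p(n_{i+1}) \geq C$ always, after at most one step $v_p(n_i) \geq C$; combined with $v_p(n_{i+1}) \leq \max(v_p(n_i), C) = v_p(n_i)$ from iterated (ii), the sequence is non-increasing and stabilizes. Under the tower-stability hypothesis of (iii'), the bound sharpens to $v_p(\lambda_f(p^a)) \leq a - 1$, and the induction yields $v_q(n_i) = 0$ eventually for every $q > p$, so $C = 0$ at each stage and strict decrease drives $v_p(n_i)$ to $0$; hence $n_i = 1$ for $i \gg 0$.

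The main obstacle is the analysis in (iii), where valuations need not decay to zero: I must rule out eventually periodic, non-constant behavior of $v_p(n_i)$. The resolution is that the contribution $C$ from the already-stabilized larger primes acts as a hard floor, while iterated (ii) supplies a ceiling $\max(v_p(n_i), C)$, and together these pin $v_p(n_i)$ into a non-increasing trajectory once it enters $[C, \infty)$ — which it must after one step.
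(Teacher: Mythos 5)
Your proof is correct. Parts (i) and (ii) essentially follow the paper's route: (i) is the same two-way divisibility argument via Lemma \ref{finitemap}(iv), and your (ii) is the same fiber-counting idea — the paper lifts a preperiodic point $t=f^k(s)$ to the $p$-element fiber $T\subset\zring_{p^{a+1}}$ and applies the pigeonhole bound $k_t+l_t\le p$ to the action of $f^{\lambda_f(p^a)}$ on $T$, whereas you restrict to periodic points and read off the exact length $m\le p$ of the lifted cycle; the two are interchangeable. Part (iii) is where you genuinely diverge. The paper inducts on $\alpha(n)$, the largest prime-power factor: from (ii) it gets $\alpha(\lambda_f(p^a))\le p^a$, hence $\alpha(\lambda_f(n))\le\alpha(n)$, and in the equality case $\lambda_f(p^a)=cp^a$ it unrolls $\lambda_f^k(n)$ explicitly as $\lcm\{p^a,c,\lambda_f(c),\dots,\lambda_f^{k-1}(c),\lambda_f^k(n/p^a)\}$ so as to invoke the induction hypothesis on the strictly smaller quantities $c$ and $n/p^a$. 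You instead track the valuations $v_p(\lambda_f^i(n))$ prime by prime, by descending induction on $p$, using the stabilized contribution $C$ of the larger primes as a floor and the bound $v_p(\lambda_f(p^a))\le a$ (from $v_p(\lambda_f(p))\le 1$ and iterated (ii)) as a ceiling; tower-stability sharpens the ceiling to $a-1$ and forces decay to $0$, so (iii') drops out of the same induction. Your bookkeeping is longer but makes the eventual monotonicity completely explicit and treats (iii) and (iii') uniformly; the paper's version is shorter but packs the same content into the lcm-unrolling step.
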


\begin{proof}
(i) We recall that for any positive integers $N$ and $\nu$, $f^{M}(x)\equiv_\nu f^{M+N}(x)\ (M \gg 0)$ for all $x \in \zhat$ if and only if $\p_f(\nu) \mid N$ by Lemma \ref{finitemap}(iv). Therefore we have
\begin{eqnarray}
 & & \p_f(\lcm(n,n')) \mid N \nonumber \\
& \Leftrightarrow & f^M(x)\equiv_{\lcm(n,n')} f^{M+N}(x)\ (M \gg 0) {\rm \ for\ all\ }x \in \zhat \nonumber \\
& \Leftrightarrow & f^M(x)\equiv_n f^{M+N}(x) \ {\rm and} \ f^M(x) \equiv_{n'} f^{M+N}(x)\ (M \gg 0)  {\rm \ for\ all\ }x \in \zhat \nonumber \\
& \Leftrightarrow & \p_f(n) \mid N  \ {\rm and} \  \p_f(n') \mid N \nonumber
\end{eqnarray}
for every positive integer $N$. This completes the proof.

%We set $k := \od_f(p^a)$ and $l := \p_f(p^a)$. We take an arbitrary $s \in \zhat$ and set $t := f^k(s)$. Then it holds $t \equiv_{p^a} f^l(t)$, moreover $f^l(t') \equiv_{p^a} t'$ for every $t' \equiv_{p^a} t$. On the other hand, since $f$ is congruence preserving, $(f^l)_{p^{a+1}}:\zring_{p^{a+1}} \to \zring_{p^{a+1}}$ is well-defined. Therefore for the set $T:=\{\ol{t'} \in \zring_{p^{a+1}}  \mid  t' \equiv_{p^a} t\} \subset \zring_{p^{a+1}}$, the restriction of $(f^l)_{p^{a+1}}$ gives the map $\phi := (f^l)_{p^{a+1}}:T \to T$. Noting that $T=\{\ol{t + cp^a} \mid 0 \leq c <p\}$ has $p$ elements, therefore we can take nonnegative integers $k_s$ and $l_s$ as $k_s+l_s \leq p$ and $\phi^{k_s}(\ol{t}) = \phi^{k_s+l_s}(\ol{t})$ by Lemma \ref{finitemap} (i). Then it follows $(f^l)^{k_s}(t) = (f^l)^{k_s+l_s}(t) \mod p^{n+1}$, that is $f^{lk_s+k}(s) = f^{lk_s+k+ll_s}(s)$. From $l_s\leq p$ and Lemma \ref{finitemap} (iii), $\p_f(p^{a+1}) = \lcm(\{ll_s \mid s\in \zring_{p^{a+1}}\})  \mid l \cdot \lcm[p] = \lcm[p]\p_f(p^a)$.\\
(ii) We set $l := \p_f(p^a)$. Let us take an arbitrary $s \in \zhat$. We put $k$ the preperiod length of the reduction $f_{p^a}:\zring_{p^a} \to \zring_{p^a}$ and $t = f^k(s)$. Let $\pi : \zhat \twoheadrightarrow \zring_{p^a} $ and $\pi_a : \zring_{p^{a+1}} \twoheadrightarrow \zring_{p^a}$ be the cannonical surjections. We define
$$T := \pi_a^{-1}(\pi(t)) = \{\ol{t + cp^a} \mid 0 \leq c <p\} \subset \zring_{p^{a+1}}.$$
 For every $\ol{t'} \in T$, from $t = f^k(s)$ it follows that
$$ f^l(t') \equiv_{p^a}  f^l(t) = f^{k+l}(s) \equiv_{p^a} f^k(s) = t ,$$
 that is $f^l (T) \subset T$. Therefore by Lemma \ref{finitemap} (i), we can take $k_t$ and $l_t$ as 
$$ f^{lk_t}(t) \equiv_{p^{a+1}} f^{l(k_t+l_t)}(t) \ \  {\rm and} \ \ k_t+l_t \leq {\rm card}(T) = p,$$
 thus in particular,
$$f^{k+lp}(s) = f^{lp}(t) \equiv_{p^{a+1}} f^{l(p+\lcm[p])}(t) = f^{k+lp + l\cdot \lcm[p]}(s)$$
since $k_t \leq p$ and $l_t \mid \lcm \{1,2,...,p \} = \lcm[p]$.
 Thus Lemma \ref{finitemap} (iv) leads
$$\p_f(p^{a+1}) \mid l\ \lcm[p] = \p_f(p^a)\lcm[p].$$

(iii) We prove this by induction on $\alpha(n)$. The case $\alpha(n) = 1$ is obvious. Let us assume that the assertion holds for every $n$ of $\alpha(n) < p^a$ and now set $\alpha(n) = p^a$. By (ii), we have $\alpha(\p_f(p^a)) \leq p \cdot \alpha(\p_f(p^{a-1}))$ thus
\begin{equation}\label{pa}
\alpha(\p_f(p^a)) \leq p \cdot \alpha(\p_f(p^{a-1})) \leq \cdots \leq p^{a-1} \alpha( \p_f(p)) \leq p^a 
\end{equation}
and hence $\alpha(\p_f(n)) \leq \alpha(n)$ by (i).
The case $\alpha(\p_f(n)) < \alpha(n)$ is evident from the assumption of the induction.
 Let us assume $\alpha(\p_f(n))=\alpha(n)=p^a$, that is $\alpha(\p_f(p^a))=p^a$, then put $\p_f(p^a) = cp^a$. By (i) of this lemma, we have
$$\p_f(n) = \lcm \left\{ \p_f(p^a), \p_f \left(\frac{n}{p^a}\right) \right\}= \lcm \left\{p^a, c ,\p_f \left(\frac{n}{p^a} \right)\right\}.$$
Therefore, by applying (i) inductively, we obtain
$$\p_f^k(n) = \lcm \left\{ p^a, c,\p_f (c) ,...,\p_f^{k-1}(c) , \p_f^k\left(\frac{n}{p^a}\right) \right\}.$$
Here $\alpha(c)$ and $\alpha\left(\frac{n}{p^a}\right)$ are both less than $p^a$, therefore by the assumption of the induction, $\p_f^k(n)$ is eventually constant.

%Let us denote by $\mu \preceq \nu$ that it holds $\mu  \mid  \nu$ or if $\mu = \prod p_i^{a_i}$, $\nu = \prod q_j^{b_j}$, $p_1^{a_1}>p_2^{a_2}>\cdots $ and $q_1^{b_1}>q_2^{b_2}>\cdots$, then it holds $p_i^{a_i}=q_i^{b_i} (1\leq i <k)$ and $p_k^{a_k}<q_k^{b_k}$ for some $k$. Then $(\num,\preceq)$ is completely ordered set isomorph to $(\num,\leq)$. According from (i) and (ii), we have $\p_f(n) \preceq n$. \\
(iii') (ii) and $\alpha(\p_f(p)) < p$ leads to $\alpha(\p_f(p^a)) < p^a$ in (\ref{pa}). This leads to $\alpha(\p_f(N))<\alpha(N)$ for every $N>1$. Therefore, we have $\alpha(\p_f^k(n)) = 1 \ (k \gg 0)$, that is, $\p_f^k(n) = 1 \ (k \gg 0)$.
\end{proof}

\begin{thm}\label{convzhat}
 Let $f$ be congruence preserving and $s \in \zhat$. Then the map $f \ttow_s$ is profinite preperiodic. Moreover, if $f$ is tower-stable, then $f \ttow_s \ttow_t$ is a constant function and independent of $t$.
\end{thm}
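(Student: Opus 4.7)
The plan is to bootstrap from Propositions \ref{towcongper} and \ref{towper} together with Lemma \ref{per}. First, to verify that $f \ttow_s$ is profinite preperiodic, I would show it is congruence stable and then invoke Theorem \ref{cppp}. By Proposition \ref{towcongper}, $\lambda_f$ is a period map of $f \ttow_s$, and by Lemma \ref{per}(iii), for every $n \in \num$ the iterates $\lambda_f^k(n)$ stabilize as $k \to \infty$. This matches Definition \ref{congpro}(i), so $f \ttow_s$ is congruence stable and Theorem \ref{cppp} delivers the profinite preperiodicity.

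For the moreover part, assuming $f$ tower-stable, Lemma \ref{per}(iii$'$) strengthens the stabilization to $\lambda_f^k(n) = 1$ for $k \gg 0$. Since $\lambda_f(1) = 1$ (as $\zring_1$ is trivial), the function $\mu : \num \to \num$ required by Proposition \ref{towper} may be taken identically equal to $1$. I would then apply Proposition \ref{towper} to $g := f \ttow_s$, using $P = \lambda_f$ and $\mu \equiv 1$; this yields a period map $\lambda$ of $g \ttow_t$ for any $t \in \zhat$, where $\lambda(n)$ is the period of the reduction $g_1 : \zring_1 \to \zring_1$. That reduction is the trivial map, so its period is $1$, giving $\lambda \equiv 1$. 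Consequently, for any $u, v \in \zhat$ the trivial congruence $u \equiv_1 v$ yields $(g \ttow_t)(u) \equiv_n (g \ttow_t)(v)$ for every $n$, so $g \ttow_t$ is constant.

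Independence from $t$ is the ``moreover'' clause of Proposition \ref{towper} with $\mu \equiv 1$: for any $t_1, t_2, u \in \zhat$ one has $t_1 \equiv_1 t_2$, hence $(g \ttow_{t_1})(u) \equiv_n (g \ttow_{t_2})(u)$ for every $n$, and the two constant values coincide in $\zhat$. I do not expect a genuine obstacle, since tower-stability drives $\lambda_f^k$ all the way down to $1$, and this trivialization propagates cleanly through the two $\ttow$ steps. The only point requiring care is that Proposition \ref{towper} demands $g = f \ttow_s$ itself be congruence stable with period map $\lambda_f$, but this is exactly what the first part establishes.
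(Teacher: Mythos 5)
Your proposal is correct and follows essentially the same route as the paper: Proposition \ref{towcongper} plus Lemma \ref{per}(iii) gives congruence stability and hence profinite preperiodicity via Theorem \ref{cppp}, and in the tower-stable case Lemma \ref{per}(iii$'$) lets you apply Proposition \ref{towper} to $f \ttow_s$ with $\mu \equiv 1$, forcing the period map of $f\ttow_s\ttow_t$ to be $\equiv 1$ and yielding constancy and independence of $t$. Your explicit checks that $\lambda_f(1)=1$ and that the reduction on $\zring_1$ is trivial are details the paper leaves implicit, but the argument is the same.
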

\begin{proof}
 By Proposition \ref{towcongper}, $\p_f$ is a period map of $f\ttow_s$. Therefore by Lemma \ref{per} (iii), $f \ttow_s$ is congruence stable. Theorem \ref{cppp} shows that $f \ttow_s$ is profinite preperiodic.

 We now assume that $f$ is tower-stable. By Lemma \ref{per} (iii'), we can apply Proposition \ref{towper} for $f \ttow_s$ and $\p_f$, with the constant functions $\mu \equiv 1$ and $\p \equiv 1$. We therefore obtain that if $t \equiv_1 t'$ and $u \equiv_1 u'$, then
$$f \ttow_s \ttow_t (u) \equiv_n f \ttow_s \ttow_t (u') \equiv_n f \ttow_s \ttow_{t'} (u')$$
for every $n$. Thus $f \ttow_s \ttow_t (u) = f\ttow_s \ttow_{t'}(u')$ for any $t,t',u,u' \in \zhat$.
% Applying Proposition \ref{towper} to $f \ttow_s$ and its period map $\p_f$, Lemma \ref{per} (iii') leads the constant function $P(n)=1$ is a period map of $f \ttow_s \ttow_t$. This means the map $f \ttow_s \ttow_t$ is constant function on $\zhat$.
\end{proof}

\begin{proof}[Proof of Theorem \ref{main1}]
((i)$\Rightarrow$(ii))
Let us denote by $\widehat{\cdot}:\zahl \to \zhat, a \mapsto \widehat{a}$ the natural embedding. For a given polynomial $f(x) = \sum a_i x^i \in \zahl[x]$, set $\widehat{f}(x) := \sum \widehat{a_i}x^i \in \zhat[x]$. Let $F: \nhat \to \nhat$ be a map such that
\[ F(n) = 
	\begin{cases}
		f \tow_a(n) = f^n(a) \in \num \  (n \in \num),\\
		\widehat{f} \ttow_{\widehat{a}} (n) = \widehat{ \widehat{f} \tow_{\widehat{a}}}(n) \in \zhat \ (n \in \zhat).
	\end{cases}
\]
Let $n_i$ be a sequence of positive integers such that $n_i \to \infty$ in $\mathbb{R}$ and $n_i \to s \in \zhat$ in $\zhat$ as $i \to \infty$. Then we have
\[
\widehat{F(n_i)}=\widehat{f^{n_i}(a)}=\widehat{f}^{n_i}(\widehat{a})=\widehat{f}\tow_{\widehat{a}}(n_i) \to \widehat{f}\ttow_{\widehat{a}}(s) = F(s) \ \ {\rm in} \ \zhat \ \  (i \to \infty).
\]
If there exists a sequence $F(n_i) \nrightarrow \infty$ in $\mathbb{R}$, then $F(n_i) = F(n_{i'})$ holds for some distinct nonnegative integers $i$ and $i'$, that is, $a$ is a preperiodic point of $f$. Therefore the function $f \tow_a$ is bounded on $\num$ and (ii) automatically hold.

Therefore we can assume $F(n_i) \rightarrow \infty$ in $\mathbb{R}$ and thus the map $F$ is continuous on $\nhat$.
By Lemma \ref{p_f<1} (ii) $\Rightarrow$ (i) and Theorem \ref{convzhat}, there exists $t \in \zhat$ such that $F^n(s) = (\widehat{f} \ttow_{\widehat{a}} )^n(s) \to t$ as $n \to \infty$ for every $s \in \zhat$. We note that this is equivalent to $\bigcap_{n=1}^{\infty}F^n(\zhat) = \{ t \}$ because $\zhat$ is compact and Hausdorff.

Let $b$ be a positive integer. We assume $F^n(b) \nrightarrow t$, that is, there exists some neighborhood $N$ of $t$ in $\zhat$ and some subsequence $F^{n_1}(b),F^{n_2}(b),...$ of $F^{n}(b)$, such that $n_i$ is increasing and $\{ F^{n_i}(b)\} \cap N = \emptyset$.

Since $\zhat$ is compact metric space, there exists a converging subsequence $F^{n_{k_i}}(b) \to s \in \zhat$. We write $m_i = n_{k_i}$. For every $i$, $F^{m_i}(F^{m_{i+j}-m_i}(b))=F^{m_{i+j}}(b) \to s$ as $j \to \infty$. Again $\zhat$ is compact metric space, let $F^{m_{i+j_l}-m_i}(b) \to s'$ be a converging subsequence of $\{ F^{m_{i+j}-m_i}(b) \}_j$, then we have $s = F^{m_i}(s') \in F^{m_i}(\zhat)$. This holds for any $i$, therefore $s \in \bigcap_{i=1}^{\infty}F^{m_i}(\zhat)=\bigcap_{n=1}^{\infty}F^n(\zhat) = \{t\}$, a contradiction. Thus, for any $b$, $F^n(b) = f\tow_a \tow_b (n)$ converges to the same $t \in \zhat = \prod_p \zahl_p$ and assertion (ii) follows.

((ii)$\Rightarrow$(i)) We don't use profinite complation here.

We prove contraposition. We assume that for some prime number $p$ and $x \in \zahl$,  $f^b(x) \equiv_p f^c(x)$ if and only if $b \equiv_p c$ for any positive integers $b$ and $c$. This leads that for every $a \in \num$, $f\tow_a : \num \to \num$ induces the bijection $(f\tow_a)_p : \zp \to \zp$. We regard $(f \tow_a)_p$ as a permutation on $\zp$. The assumption also leads that $f$ is a non-constant polynomial, therefore we can take $a \in \num$ as that $\lim_{n \to \infty} f \tow_a \tow_b (n) = \infty$ in $\real$ holds for any $b \in \num$.

 If $(f\tow_a)_p$ has two or more cycles, then we take $b,b' \in \num$ as that $\bar{b}$ and $\bar{b'}$ are in distinct cycles of $(f\tow_a)_p$ on $\zp$. In particular, for every $n$, we have $f\tow_a\tow_b(n) = (f\tow_a)^n(b) \not \equiv_{p} f\tow_a\tow_{b'}(n)$. This contradicts to the independence of $b$ in (ii).

 If $(f\tow_a)_p$ is a permutation of one cycle, then the cycle length is $p$ and $f\tow_a\tow_b$ becomes a permutation on $\zp$ again. Therefore the sequence $\{ f\tow_a\tow_b(n) \mod p \}_n$ cannot be eventually stable.
\end{proof}

\section{Period Map of Iterated Polynomial}
 Throughout this section, we assume $f(x) = \sum_i c_i x^i \in \zahl[x]$ is a tower-stable polynomial. For every integer $a$, let $\od_{f,a}(n)$ (resp. $\p_{f,a}(n)$) be the tail (resp. cycle) length on $\ol{a} \in \zn$ of the reduction $f_n:\zn \to \zn$ of $f$.
 To show Theorem \ref{main2}, we evaluate $\p_{f,a}$ and $\od_{f,a}$. Evaluation of $\p_{f,a}$ is established by Fun and Liao in \cite{F}, in a context of $p$-adic dynamics. But one for $\od_{f,a}$ is not explicitly written, although implicitly in evaluating process of $\p_{f,a}$, because it has not been of principal interest. Therefore we reevaluate the tail length. We use following Lemmas 4.1 and 4.2.
\begin{lemm}
Let $a,c$ and $n$ be positive integers and $p$ prime number.  The following holds:
\begin{enumerate}
\item $f(a+cp^n) \equiv_{p^{n+1}} f(a) + cf'(a)p^n $,
\item (chain rule) $(f^n)' = \prod_{i=0}^{n-1} f'\circ f^i$
\end{enumerate}
where $g'$ denotes the derivative of $g$ for each polynomial $g$.
\end{lemm}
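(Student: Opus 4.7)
The plan is to prove these two elementary identities directly from the polynomial expansion, since neither uses any structure from the earlier sections.

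For part (i), I would expand $f(a+cp^n)$ via the binomial theorem applied term by term. Writing $f(x)=\sum_i c_i x^i$, each monomial contributes
\[
c_i(a+cp^n)^i = c_i \sum_{k=0}^{i} \binom{i}{k} a^{i-k} (cp^n)^k.
\]
The key observation is that for $k\geq 2$ the factor $(cp^n)^k$ is divisible by $p^{2n}$, and since $n\geq 1$ we have $2n\geq n+1$, so all such terms vanish modulo $p^{n+1}$. Only the $k=0$ and $k=1$ terms survive, giving
\[
c_i(a+cp^n)^i \equiv c_i a^i + c_i\, i\, a^{i-1}\, c p^n \pmod{p^{n+1}}.
\]
Summing over $i$ yields $f(a)+cp^n \sum_i i c_i a^{i-1} = f(a)+c f'(a) p^n$, which is (i).

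For part (ii), I would argue by induction on $n$ using the ordinary chain rule for a single composition. The base case $n=1$ is immediate as $(f^1)' = f' = f'\circ f^0$. For the induction step, write $f^{n+1}=f\circ f^n$ and apply the chain rule:
\[
(f^{n+1})'(x) = f'(f^n(x))\cdot (f^n)'(x) = f'(f^n(x)) \cdot \prod_{i=0}^{n-1} f'(f^i(x)) = \prod_{i=0}^{n} f'(f^i(x)),
\]
completing the induction.

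There is really no substantive obstacle here; both statements are formal identities. The only point to be careful about is the inequality $2n \geq n+1$ used in (i), which is precisely why the hypothesis $n \geq 1$ suffices and nothing stronger is needed. Both facts will be used in Section 4 to lift information about cycle and tail lengths from $\zn$ to $\zring_{p^{n+1}}$ via Hensel-type arguments on the iterated map.
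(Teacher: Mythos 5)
Your proof is correct, and it is exactly the standard argument; the paper in fact states this lemma without any proof at all, treating both parts as elementary. Your binomial-expansion step for (i), using $kn\ge 2n\ge n+1$ for $k\ge 2$, and the induction via the ordinary chain rule for (ii), are precisely what the omitted verification would be.
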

\begin{lemm}
 Let $p$ be a prime number, $g(x)=bx+c \in \zahl[x]$ be an linear polynomial and $s$ an integer. Then we have 
$$\p_{g,s}(p) \mid
\begin{cases}
1 & {\rm if} \  b \equiv_p 0, \\
p & {\rm if} \  b \equiv_p 1 
\end{cases} 
\ {\rm and}\ 
\od_{g,s}(p) \leq
\begin{cases}
1 & {\rm if} \ b \equiv_p 0, \\
0 & {\rm if} \ b \equiv_p 1.
\end{cases}
$$
\end{lemm}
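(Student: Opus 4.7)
The plan is to observe that the reduction $g_p : \zp \to \zp$ of a linear polynomial $g(x) = bx + c$ is extremely simple in the two hypothesized cases, and to handle each case by direct inspection of the orbit of $\ol{s}$.

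First, I would treat the case $b \equiv_p 0$. Here $g_p$ is the constant function sending every element of $\zp$ to $\ol{c}$, and in particular $g_p(\ol c) = \ol c$, so $\ol c$ is a fixed point that every orbit reaches in at most one step. This immediately yields $\od_{g,s}(p) \leq 1$ and $\p_{g,s}(p) = 1$ for every integer $s$, which matches the desired bounds.

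Next, I would treat the case $b \equiv_p 1$. Here $g_p$ is the translation $\ol x \mapsto \ol{x+c}$, so $g_p^k(\ol s) \equiv \ol{s + kc} \pmod p$. Since a translation on a group is always a bijection, the tail length vanishes, giving $\od_{g,s}(p) = 0$. The cycle length on $\ol s$ equals the additive order of $\ol c$ in $\zp$, which is $1$ if $p \mid c$ and $p$ otherwise; in either case it divides $p$.

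Since both cases reduce to a one-line inspection, there is no real obstacle. The lemma is stated only for slopes $\equiv 0, 1 \pmod p$ because these are precisely the values that will arise from derivatives of iterates of $f$ via the chain rule in Lemma 4.1, so the asymmetric bound is intentional: a vanishing derivative collapses the dynamics modulo $p$ to a single fixed point, while a derivative congruent to $1$ allows at most a pure rotation of length $p$.
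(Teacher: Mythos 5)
Your proof is correct. The paper itself gives no argument for this lemma --- it simply cites [F-L, Lemma 1] --- so your direct two-case verification is a valid, self-contained substitute: for $b \equiv_p 0$ the reduction $g_p$ is the constant map $\ol x \mapsto \ol c$ with fixed point $\ol c$, giving tail length at most $1$ and cycle length $1$; for $b \equiv_p 1$ it is the translation $\ol x \mapsto \ol{x+c}$, a bijection with tail length $0$ whose cycle length is the additive order of $\ol c$, hence divides $p$. Your closing remark about why only the slopes $0$ and $1$ modulo $p$ matter also matches how the lemma is used in the proof of Theorem 4.3, where $b \equiv_p \mu_p^{L/\p(p)}$ is forced to be $0$ or $1$ by Fermat's little theorem.
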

\begin{proof} See [F-L, Lemma 1].
\end{proof}
% \begin{proof}
% The case $b \equiv_p 0,1$ is easy to see because then $g(x) \equiv_p c$ or $g(x) \equiv_p x+c$.\\
% Assume that $b \not \equiv_p 0,1$.
% If $c \not \equiv_p 0$, then $g^m(a) \equiv_p b^m a + \frac{b^m-1}{b-1}c$. $g^m(a) \equiv_p a$ holds if $b^m \equiv_p 1$ or $a \equiv_p (b-1)^{-1}c$. Therefore, $\p_g(p) = \Ind_p b$.\\
% If $c \equiv_p 0$, then $g^m(a) \equiv_p b^ma$, we have $\p_g(p) = \Ind_p b$.
% \end{proof}
 From now, we fix a polynomial $f$ and a (positive) integer $a$. We denote $\od_{f,a}$ by $\od$, and $\p_{f,a}$ by $\p$.
 We define {\em mod $p$ multiplier} of $f$ on $a$ by
$$\mu_p := (f^{\p(p)})'(f^{\od(p)}(a)) = \prod_{i=0}^{\p(p)-1}f'(f^{\od(p)+i}(a)).$$
\begin{thm}\label{polyper}
Let $k$ be a positive integer and $p$ a prime number. 
\begin{enumerate}
 \item If $\mu_p \equiv_p 0$, then it holds $\p(p^k) = \p(p)$ and $\od(p^k) \leq \od(p) + (k-1) \cdot \p(p)$.
 \item If $\mu_p \not \equiv_p 0$, then it holds $\od(p^k)=\od(p)$ and $\p(p^k) \mid  \p(p) \cdot (p-1) \cdot  p^{k-1}$.
\end{enumerate}
\end{thm}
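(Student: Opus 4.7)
The plan is to reduce everything to the behavior of the ``first-return'' polynomial $g := f^{\p(p)} \in \zahl[x]$ at the base point $b := f^{\od(p)}(a)$. By construction $g(b) \equiv_p b$ and, by the chain rule (Lemma 4.1(ii)), $g'(b) = \mu_p$. Two elementary observations drive the accounting. First, reduction modulo $p$ factors through reduction modulo $p^k$, so $\p(p) \mid \p(p^k)$ and $\od(p) \leq \od(p^k)$ always hold. Second, writing $L'$ for the cycle length of $g$ at $b$ modulo $p^k$, one has $\p(p^k) = L' \cdot \p(p)$, so each case reduces to controlling $L'$ and the $g$-tail of $b$ modulo $p^k$.

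For case (i), with $\mu_p \equiv_p 0$, I would track the valuations $v_i := v_p(g^i(b) - g^{i-1}(b))$. Writing $g^i(b) = g^{i-1}(b) + p^{v_i} m$ and applying Lemma 4.1(i) to $g$ yields $g^{i+1}(b) - g^i(b) \equiv_{p^{v_i+1}} m \cdot g'(g^{i-1}(b)) \cdot p^{v_i}$. Since $g^{i-1}(b) \equiv_p b$ forces $g'(g^{i-1}(b)) \equiv_p \mu_p \equiv_p 0$, we deduce $v_{i+1} \geq v_i + 1$; combined with $v_1 \geq 1$ this gives $v_k \geq k$. Hence $g^{k-1}(b) = f^{\od(p)+(k-1)\p(p)}(a)$ is fixed by $g$ modulo $p^k$, which furnishes both $\od(p^k) \leq \od(p) + (k-1)\p(p)$ and $\p(p^k) \mid \p(p)$; together with $\p(p) \mid \p(p^k)$ one gets equality.

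For case (ii), with $\mu_p$ a unit modulo $p$, the linear approximation of Lemma 4.1(i) shows that $g$ is injective, and hence bijective, on the fiber $\{x \in \zring_{p^k} \mid x \equiv_p b\}$; the $g$-orbit of $b$ modulo $p^k$ is therefore a pure cycle, forcing $\od(p^k) \leq \od(p)$. To bound $L'$, let $M$ be the multiplicative order of $\mu_p$ modulo $p$ (so $M \mid p-1$) and set $h := g^M$; by the chain rule $h'(b) \equiv_p \mu_p^M \equiv_p 1$. I would then prove by induction on $k \geq 1$ that the cycle length of $h$ at $b$ modulo $p^k$ divides $p^{k-1}$. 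Given $h^{L_0}(b) \equiv_{p^k} b$, set $H := h^{L_0}$, so that $H(b) = b + p^k z$ for some integer $z$, and since each $h^i(b) \equiv_p b$, the chain rule again yields $H'(b) \equiv_p 1$. Expanding $H(b + p^k y) \equiv_{p^{k+1}} H(b) + p^k y H'(b)$ (the quadratic remainder $p^{2k} y^2 Q(p^k y)$ dies modulo $p^{k+1}$ whenever $k \geq 1$) shows that iterating $H$ shifts the coordinate $y$ by $z$ modulo $p$, so $H^p(b) \equiv_{p^{k+1}} b$. Therefore $L'$ divides $M p^{k-1}$, which is a divisor of $(p-1) p^{k-1}$, and this gives $\p(p^k) \mid \p(p)(p-1)p^{k-1}$.

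The main technical point is the inductive step in case (ii): one must check that the quadratic Taylor remainder truly vanishes modulo $p^{k+1}$ (so the induction naturally starts at $k = 1$, where the base cycle length $1$ is immediate from $h(b) \equiv_p b$) and that $H'(b) \equiv_p 1$ propagates through each step via the chain rule applied to the intermediate iterates $b, h(b), \ldots, h^{L_0 - 1}(b)$, all of which lie in one residue class modulo $p$. Once these facts are in place, the rest of the argument is formal bookkeeping.
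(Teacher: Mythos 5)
Your argument is correct, and it reorganizes the proof in a way that genuinely differs from the paper's, even though both rest on the same two tools (the Taylor congruence of Lemma 4.1(i) and the chain rule, which together reduce the multiplier of any iterate to a power of $\mu_p$ modulo $p$). The paper runs a single induction on $k$ for both cases at once: it conjugates the action of $f^L$ on the fiber $t + p^k\zring_p$ to the linear map $g(s) = bs+c$ on the coordinate $s$, computes $b \equiv_p \mu_p^{L/\p(p)}$, and quotes Lemma 4.2 (Fan--Liao) to read off the tail and cycle of that linear map; the choice $L = \p(p)(p-1)p^{k-1}$ in the unit case is made precisely so that $b \equiv_p 1$ by Fermat. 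You instead split the two cases: in the nilpotent case you track the valuations $v_i = v_p(g^i(b)-g^{i-1}(b))$ and get the contraction $v_{i+1}\geq v_i+1$ directly (this replaces the $b\equiv_p 0$ branch of Lemma 4.2), and in the unit case you first prove $\od(p^k)=\od(p)$ by observing that $g$ is a bijection of each mod-$p$ fiber of $\zring_{p^k}$, and then bound the cycle length by passing to $h=g^M$ with $M$ the order of $\mu_p$ and running a lifting-the-exponent induction (this replaces the $b\equiv_p 1$ branch). Your route avoids Lemma 4.2 entirely and yields the marginally sharper bound $\p(p^k)\mid \p(p)\,M\,p^{k-1}$ with $M\mid p-1$; the paper's route is more uniform and dispatches both conclusions of each case from one congruence $f^{K+L}(a)\equiv_{p^k}f^K(a)$ via Lemma \ref{finitemap}(iii). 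One presentational caution: the identity $\p(p^k)=L'\cdot\p(p)$ you state at the outset presupposes that $b=f^{\od(p)}(a)$ lies on the $f$-cycle modulo $p^k$, which in case (ii) is only available after the fiber-bijectivity step, so that step must come first (as it does in your ordering); in case (i) you correctly bypass this identity and argue through Lemma \ref{finitemap}(iii) instead.
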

\begin{proof}
 From Lemma \ref{finitemap} and an evident inequality $\od(p^k) \geq \od(p)$, it is enough to show $f^{K+L}(a) \equiv_{p^k} f^K(a)$ for 
\begin{equation}
(K,L) = 
\left\{
\begin{array}{llll}
(\od(p)+(k-1)\cdot \p(p), & \p(p) & ) & {\rm if}\ \mu_p \equiv_p 0, \\
(\od(p), & \p(p) \cdot (p-1) \cdot p^{k-1} & )& {\rm if}\ \mu_p \not \equiv_p 0.
\end{array} \label{KLdef}
\right.
\end{equation}

We show this by induction on $k$. The case $k=1$ is evident.
 We take a pair $(K,L)$ as above and assume that they satisfy
$$f^{K+L}(a) \equiv_{p^k} f^K(a).$$
We put $t := f^K(a)$ and get $$t \equiv_{p^k} f^{L}(t) =: cp^k + t \ (c \in \zahl).$$ 
 Applying Lemma 4.1(i) to $f^L$, we obtain
$$f^L(t + sp^k) \equiv_{p^{k+1}} s(f^L)'(t) p^k+ cp^k + t = (bs+c)p^k + t\text{ with } b:= (f^L)'(t),$$ that is $$f^L(t+sp^k) \equiv_{p^{k+1}} t + g(s)p^k,$$ where $g(s) := bs+c$. This implies
$$(f^L)^n(t+sp^k) \equiv_{p^{k+1}} t + g^n(s)p^k.$$
Therefore if $g^m(s) \equiv_p g^{m + n}(s)$ for some $m$ and $n$, we have
\begin{equation}
(f^L)^{m}(t) \equiv_{p^{k+1}} (f^L)^{m + n}(t). \label{KL}
\end{equation}

Applying Lemma 4.1(ii) to $f^L$, we have 
\begin{eqnarray}
b  = (f^L)'(t) = \prod_{i = 0}^{L-1} f'(f^i(t)) = \prod_{i=0}^{\frac{L}{\p(p)}-1} \prod_{j=0}^{\p(p)-1} f'(f^{i\p(p)+j}(t)) \nonumber \\
 \equiv_p  \prod_{i=0}^{\frac{L}{\p(p)}-1}\prod_{j=0}^{\p(p)-1} f'(f^{ \od(p) + j}(a)) = \mu_p^{\frac{L}{\p(p)}} \nonumber
\\
=
\begin{cases}
\mu_p & \equiv_p  0 \ \ \ {\rm if} \ \mu_p \equiv_p 0, \\
\mu_p^{(p-1) \cdot p^{k-1}} & \equiv_p 1\ \ \ {\rm if} \ \mu_p \not \equiv_p 0
\end{cases}
 \label{bmu}
\end{eqnarray}
 where the first congruence follows from 
$$f^{i\p(p)}(t) = f^{i\p(p)+K}(a) = f^{i'\p(p)+\od(p)}(a) \equiv_p f^{\od(p)}(a)$$
and $f^j,f' \in \zahl[x]$ (this implies they are congruence preserving), the last congruence in (\ref{bmu}) for the case $\mu_p \not \equiv_p 0$ follows from Fermat's little theorem.
 Applying Lemma 4.2 to $g(s) = bs+c$, by (\ref{bmu}) we obtain $g^m(s) \equiv_p g^{m+n}(s)$ for
$$
(m,n) = 
\begin{cases}
(1,1) & {\rm if} \ \mu_p \equiv_p 0, \\
(0,p) & {\rm if} \ \mu_p \not \equiv_p 0.
\end{cases} 
$$
 By subsutituting the above pair $(m,n)$, $t = f^K(a)$ and (\ref{KLdef}) into (\ref{KL}), we complete the induction.
\end{proof}
\begin{prop}
 If $\mu_p \equiv_p 0$, then the $p$-adic part of $f \ttow_a (s)$ is algebraic number for every $s \in \zhat$.
\end{prop}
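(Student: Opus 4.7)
The plan is to exhibit the $p$-adic component of $f\ttow_a(s)$ as $f^r(\alpha)$ for an algebraic number $\alpha \in \zahl_p$ and a nonnegative integer $r < \p(p)$ depending on $s$. The algebraic number $\alpha$ will be a $p$-adic periodic point of $f$ of period dividing $\p(p)$, arising as the limit of the orbit of $a$ along multiples of $\p(p)$.

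First I would unpack Theorem \ref{polyper}(i): the hypothesis $\mu_p \equiv_p 0$ gives $\p(p^k) = \p(p)$ and $\od(p^k) \leq \od(p) + (k-1)\p(p)$ for every $k \geq 1$. Thus for each $k$, once $m\p(p) \geq \od(p^k)$ the residue $f^{m\p(p)}(a) \bmod p^k$ is independent of $m$ (because $\p(p^k) = \p(p)$ divides $(m'-m)\p(p)$). Hence the sequence $\{f^{m\p(p)}(a)\}_{m \geq 1}$ is Cauchy in $\zahl_p$ and has a limit $\alpha \in \zahl_p$. By continuity of the polynomial $f^{\p(p)}$ on $\zahl_p$ we obtain $f^{\p(p)}(\alpha) = \alpha$, so $\alpha$ is a root of the nonzero polynomial $f^{\p(p)}(x) - x \in \zahl[x]$, and is therefore algebraic over $\mathbb{Q}$. (If $f$ is constant, the claim is trivial; otherwise $f^{\p(p)}(x) - x$ is a nonzero polynomial of positive degree.)

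Next I would identify $f\ttow_a(s)$ in $\zahl_p$ for arbitrary $s \in \zhat$. Writing $r \in \{0,1,\dots,\p(p)-1\}$ for the unique representative of $s \bmod \p(p)$, I pick a sequence of positive integers $n_i$ with $n_i \to \infty$ in $\mathbb{R}$ and $\widehat{n_i} \to s$ in $\zhat$. For all sufficiently large $i$, $n_i \equiv r \pmod{\p(p)}$, so we may write $n_i = r + \p(p)m_i$ with $m_i \to \infty$. Then
\[
f^{n_i}(a) = f^r\bigl(f^{\p(p)m_i}(a)\bigr) \longrightarrow f^r(\alpha) \quad \text{in } \zahl_p
\]
by continuity of $f^r$. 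Since the $p$-adic component of $f\ttow_a(s)$ is by definition the image of this limit in $\zahl_p$, it equals $f^r(\alpha)$, which is algebraic over $\mathbb{Q}$ as a polynomial in an algebraic number.

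There is no serious obstacle: the combinatorial input (period stabilizing at $\p(p)$, tail growing at most linearly) is already handed to us by Theorem \ref{polyper}(i), and the rest is a routine continuity argument. The only point requiring a little care is verifying that the sequence $\{f^{m\p(p)}(a)\}_m$ is $p$-adically Cauchy uniformly in $k$, which is exactly what the bound $\od(p^k) \leq \od(p) + (k-1)\p(p)$ provides; after that, the identification of $f\ttow_a(s)|_{\zahl_p}$ with $f^r(\alpha)$ follows immediately from Proposition \ref{profper}(ii).
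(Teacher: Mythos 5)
Your proof is correct and takes essentially the same route as the paper: both use Theorem \ref{polyper}(i) to show the orbit of $a$ is $p$-adically Cauchy along an arithmetic progression of step $\p(p)$, identify the limit as a fixed point of $f^{\p(p)}$ (hence algebraic), and then recognize the $p$-adic part of $f\ttow_a(s)$ as an integer iterate of $f$ applied to that limit. The only cosmetic difference is that the paper starts the progression at an $s' \equiv s \pmod{\p(p)}$ so its limit is $f\ttow_a(s)$ directly, whereas you start at residue $0$ and post-compose with $f^r$.
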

\begin{proof}
Let $s'$ be a positive integer such that $s'>\od(p)$ and $s' \equiv_{\p(p)} s$.
By the above theorem, for every $k \in \num$ we have
$$f^{k\p(p)+s'}(a) \equiv_{p^k} f^{(k+1)\p(p)+s'}(a).$$
Therefore the sequence $\{x_k:=f^{k\p(p)+s'}(a)\}$ converges in $\zahl_p$  as $k \to \infty$. Since $x_{k+1} = f^{\p(p)}(x_k)$, $x := \lim_{k\to \infty}x_k$ satisfies $f^{\p(p)}(x)=x$, hence $x$ is algebraic over $\mathbb{Q}$.

 For every increasing sequence of positive integers $\{n_i\}$ such that $n_i \to s$ in $\nhat$ as $i \to \infty$, $\{f^{n_i}(a)\}$ is a partial sequence of $\{x_k\}$ eventually. Thus $f \ttow_a(s) = \lim_{i \to \infty} f^{n_i}(a) = x$ is algebraic.
\end{proof}

\begin{prop}
Let $b$ be an $f$-valid number. Then it holds $\p_f(b^{n+1}) \mid b^n$ for every sufficiently large $n$.
\end{prop}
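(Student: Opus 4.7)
The plan is to reduce the statement to the prime-power case by exploiting square-freeness of $b$, apply Theorem \ref{polyper} to each prime-power factor, and then use tower-stability together with the two divisibility clauses packaged into $f$-validity to control the resulting prime-power valuations.

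Since $b$ is square-free by definition of $f$-validity, I would write $b = p_1 p_2 \cdots p_r$ with distinct primes $p_i$ and apply Lemma \ref{per}(i) inductively to obtain
\[
\p_f(b^{n+1}) = \lcm\bigl\{\p_f(p_1^{n+1}),\ldots,\p_f(p_r^{n+1})\bigr\},
\]
so it suffices to show $\p_f(p_i^{n+1}) \mid b^n$ for every $i$ and every sufficiently large $n$. Fix $i$ and set $p := p_i$. For each residue $a \in \zring_{p^{n+1}}$, Theorem \ref{polyper} gives $\p_{f,a}(p^{n+1}) \mid \p_{f,a}(p)\cdot(p-1)\cdot p^n$ (the vanishing-multiplier case is strictly stronger but still implies this divisibility). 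Taking $\lcm$ over $a$ and using Lemma \ref{finitemap}(iv) to identify $\p_f(p^k) = \lcm_a \p_{f,a}(p^k)$ would yield
\[
\p_f(p^{n+1}) \mid \p_f(p)\cdot (p-1)\cdot p^n.
\]

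It then remains to count valuations. Tower-stability of $f$ and Lemma \ref{p_f<1} give $\alpha(\p_f(p)) < p$, hence $p \nmid \p_f(p)$; combined with $p \nmid p-1$, this forces the $p$-adic valuation of $\p_f(p)(p-1)p^n$ to equal exactly $n$, which matches $v_p(b^n)$. For any other prime $q$ dividing $\p_f(p)(p-1)$, the $f$-valid conditions force $q \in \{p_1,\ldots,p_r\}$: the classical valid clause handles the case $q \mid p-1$, while the extra clause involving $\p_f$ handles $q \mid \p_f(p)$. Setting
\[
C := \max_{i}\,\max_{q\neq p_i,\ q\text{ prime}} v_q\bigl(\p_f(p_i)(p_i-1)\bigr),
\]
a finite constant independent of $n$, I would conclude that for every $n \geq C$ and every $j$, one has $v_{p_j}(\p_f(p_i^{n+1})) \leq n$ (the case $j=i$ is the valuation-$n$ computation above; for $j \neq i$ the valuation is bounded by $C$). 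Hence $\p_f(p_i^{n+1}) \mid b^n$ for every $i$, and therefore $\p_f(b^{n+1}) \mid b^n$ by the opening reduction.

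The analytic heavy lifting has already been performed by Theorem \ref{polyper}; the main obstacle in this proof is purely bookkeeping, namely assembling tower-stability and both clauses of $f$-validity into a single uniform threshold $n \geq C$ that simultaneously absorbs the auxiliary prime factors contributed by $\p_f(p_i)$ and $p_i-1$ into the prime factorization of $b^n$ with the correct multiplicities.
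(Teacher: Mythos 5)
Your proposal is correct and follows essentially the same route as the paper: square-freeness reduces $\p_f(b^{n+1})$ to $\lcm_{p\mid b}\p_f(p^{n+1})$ via Lemma \ref{per}(i), Theorem \ref{polyper} bounds each factor by $\p_f(p)(p-1)p^n$, and $f$-validity absorbs the auxiliary primes into $b^n$ once $n$ exceeds a finite threshold. Your explicit remark that tower-stability forces $p\nmid\p_f(p)$ (so the $p$-adic valuation is exactly $n$) is a detail the paper leaves implicit, but the argument is the same.
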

\begin{proof}
From Lemma \ref{per}, Theorem \ref{polyper} and $b$ is square-free,
$$\p_f(b^{n+1}) = \mathop{\lcm_{p \mid b}} \{\p_f(p^{n+1})\}  \mid  \mathop{\lcm_{p \mid b}} \{\p_f(p) \cdot (p-1) \cdot p^n\}.$$
For $p$ with $p \mid b$, if we decompose $\p_f(p)\cdot(p-1)=\prod q_i^{r_i}$, then since $b$ is $f$-valid, we have $q_i \mid b$ for every $q_i$. Therefore if we put  $N = \max_p \max_{q_i} r_i$, then we have $\p_f(p) \cdot(p-1) \cdot p^n  \mid  b^n$ for every prime $p \mid b$ and integer $n \geq N$.
\end{proof}

\begin{lemm}Let $\od$ and $\p$ be positive integers, $p$ prime, $x \in \zahl_p$ be $p$-adic integer that is algebraic over $\mathbb{Q}$ and $\{x_n\}$ be an nondecreasing sequence of positive integers such that $x_n \equiv_{p^n} x$ as $p$-adic integers. If $x_n$ is not eventually constant, then we have 
$$x_n \geq \p n + \od$$
for every sufficiently large $n$.
\end{lemm}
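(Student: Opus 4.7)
The plan is to turn the divisibility hypothesis $x_n \equiv_{p^n} x$ into a polynomial lower bound on $x_n$ by exploiting the algebraicity of $x$. First, since $\{x_n\}$ is a nondecreasing sequence of positive integers that is not eventually constant, it must be unbounded (a bounded nondecreasing integer sequence is eventually constant), so $x_n \to \infty$ in $\real$.

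Because $x$ is algebraic over $\mathbb{Q}$, clearing denominators in a minimal polynomial produces a nonzero polynomial $Q(y) = \sum_{i=0}^{d} a_i y^i \in \zahl[y]$ with $Q(x) = 0$ in $\zahl_p$. Since $Q$ has integer coefficients, evaluation is congruence preserving, so from $x_n \equiv_{p^n} x$ in $\zahl_p$ we deduce $Q(x_n) \equiv_{p^n} Q(x) = 0$. As $Q(x_n)$ is an ordinary integer, this gives the divisibility $p^n \mid Q(x_n)$ in $\zahl$.

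Next, $Q$ has only finitely many complex roots, so $Q(x_n) \neq 0$ for all $n \gg 0$; hence $|Q(x_n)| \geq p^n$ for such $n$. On the other hand, with $C := \sum_{i=0}^{d} |a_i|$ one has the crude estimate $|Q(y)| \leq C y^{d}$ for every positive integer $y$. Combining these yields $x_n \geq (p^n / C)^{1/d}$, which grows exponentially in $n$ and therefore dominates $\p n + \od$ for every fixed pair of positive integers $\p, \od$ once $n$ is sufficiently large.

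The argument presents essentially no obstacle once one thinks of annihilating $x$ by an integer polynomial; the only delicate point is ensuring $Q(x_n) \neq 0$, which is secured by $x_n \to \infty$ together with $Q$ having only finitely many roots. No deeper $p$-adic machinery is invoked, only the elementary divisibility $p^n \mid Q(x_n)$ in $\zahl$ against a polynomial upper bound.
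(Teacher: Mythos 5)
Your proof is correct and follows essentially the same route as the paper's: annihilate $x$ by an integer polynomial $Q$, deduce $p^n \mid Q(x_n)$ from congruence preservation, note $Q(x_n)\neq 0$ for $n\gg 0$ since $x_n\to\infty$, and conclude $|Q(x_n)|\geq p^n$ forces $x_n$ to grow exponentially, hence faster than $\p n+\od$. You simply make the final estimate a bit more explicit (via $|Q(y)|\leq C y^d$) where the paper writes $x_n\geq O(p^{n/\deg F})$.
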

\begin{proof}
Let $F$ be a polynomial with integer coefficients such that $F(x)=0$. Then $x_n \equiv_{p^n} x$ leads to
$$F(x_n) \equiv_{p^n} F(x) = 0.$$
We have $F(x_n) \neq 0 (n \gg 0)$, because $x_n$ is nondecreasing and not eventually constant. Hence $$|F(x_n)| \geq p^n,$$ this leads $x_n \geq O(p^{n/\deg F}) > O(n)$ as $n \to \infty$.
\end{proof}

\begin{proof}[Proof of Theorem \ref{main2}]
By Theorem \ref{convzhat}, we can take the limit $x = \lim_{k \to \infty}f\ttow_a \tow_t (k)$.
 This $x$ satisfies $f \ttow_a (x) = x$. Write the expansion of the $b$-adic part of $x$ as
$$x = c_0 +c_1b +c_2b^2+...+c_nb^n+... \; ,\ {\rm where} \ 0 \leq c_n < b$$ and $x_{n|}$ the positive integers given by the first $n$ digits of the expansion, that is,
$$x_{n|} := c_0 + c_1b+c_2b^2+...+c_{n-1}b^{n-1}.$$
Then the sequence $x_{n|}$ is nondecreasing.
 First, we assume the sequence $x_{n|}$ is not eventually constant. We show that 
\begin{equation}
  \od(b^n) \leq x_{n|} \ (n \gg 0). \label{xnodbn}
\end{equation}
 We have $\od(b^n) = \max_{p^a \mid b} \{ \od(p^{an}) \}$ by a similar arguement as Lemma \ref{per}(i). Therefore by Theorem 4.3, we have 
\begin{equation}
\od(b^n) \leq l n + k \ (n \gg 0) \label{bnlnk}
\end{equation}
for some integers $l,k$. Moreover, if $\mu_p \not \equiv_p 0$ for every $p \mid b$, (\ref{bnlnk}) holds for $l = 0$ and (\ref{xnodbn}) follows obviously. We assume $\mu_p \equiv_p 0$ for some $p$. By Proposition 4.4, the $p$-adic component of $x$ is algebraic, and $x_{n|} \equiv_{p^n} x$ by the definition of $x_{n|}$. Thus by Lemma 4.6, $x_{n|} > l n+ k \ (n \gg 0)$. Combining this with (\ref{bnlnk}) gives (\ref{xnodbn}).
 
By Proposition 4.5, $x_{n|} \equiv_{b^n} x$ implies 
\begin{equation}\label{xnbnxn}
x_{n|} \equiv_{\p(b^n)} x \ (n \gg 0).
\end{equation}
From (\ref{xnodbn}) and (\ref{xnbnxn}),
\begin{eqnarray}
f^{x_{n|}}(a) &\equiv_{b^n} &f^x(a) \ (n \gg 0) \nonumber \\
& = & x \equiv_{b^n} x_{n|}. \nonumber
\end{eqnarray}
Therefore taking $x_n$ as
$$
x_n := 
\begin{cases}
x_{N|}\ {\rm if}\ n \leq N,\\
x_{n|}\ {\rm otherwise},
\end{cases}
\ (N \gg 0)
$$
satisfies 
\begin{equation}
f^{x_n}(a) \equiv_{b^n} x_n \ \ {\rm and}\ \ x_n = c'_n b^{n-1} + x_{n-1}\ \ \ (0 \leq c'_n < b), \nonumber
\end{equation}
the required condition.

If $x_{n|}$ is eventually constant, then let us denote the constant by $x'$ (to distinguish from $x \in \zhat$). We replace $x_{n|}$ as $x_{n|} := x' + b^n$ and apply the above argument. (In this case, the argument is easier because (\ref{bnlnk}) and the defitnition of $x_{n|}$ induces (\ref{xnodbn}) directly.)
\end{proof}

\section{Remaining Problems}
All remaining problems of tetration raised in \cite{S-S} is generalizable to iterated polynomial. The most interesting one is for the limit $$^\infty f(a) := \lim_{n \to \infty} f\tow_a \tow_b (n) \in \zhat.$$
\begin{prob}
Are $^\infty f(a)$ irrational, and transcendental over $\mathbb{Q}$ except trivial counterexamples (such that $^\infty f(a) \in \num$, or $\mu_p \equiv_p 0$: see Proposition 4.4)? Are there some nontrivial algebraic, or analytic correlations over the set $\{^\infty f(a)\}_{f,a}$?
\end{prob}
Relatively many of polynomials are tower-stable.
Theorem \ref{convzhat} and Lemma \ref{p_f<1} (ii) lead a probability of tower-stable function $C_{tow}$ as 
\begin{eqnarray*} C_{tow} & := &\int_{\fct{Cong}(\zhat)} \{f \mid f \text{ is tower-stable} \}\ d\mu \\
& = & \prod_p \int_{\fct{Cong}(\zp)} \{ f \mid f \text{ is not a cycle permutation of length } p \}\ d\mu_p \\
& = & \prod_p \left(1-\frac{(p-1)!}{p^p} \right) \sim 0.688,
\end{eqnarray*}
where we denote by $\fct{Cong}(X)$ the set of all congruence preserving functions, or the set of their reductions, on $X$. $\mu$ and $\mu_p$s are the additive Haar probability measures.
\begin{prob}
Is $C_{tow}$ a transcendental number?
\end{prob}
In Chapter 4, we employed polynomial $f$. However, some congruence preserving maps on $\num$ are not polynomial.  For instance, the map $n \mapsto \lceil e^{n}n! \rceil$ is introduced in \cite{Ce-Gr-Gu}.
\begin{prob}
Does Theorem \ref{main2} hold for every congruence preserving map?
\end{prob}

\end{document}